\newcommand{\Coll}{{\rm Coll}}
\newcommand{\lev}{{\rm Lev}}
\newcommand{\TP}{{\rm TP }}
\newcommand{\ITP}{{\rm ITP }}
\newcommand{\dom}{{\rm dom}}
\newcommand{\ZFC}{{\rm ZFC }}
\newcommand{\force}{\Vdash}
\newcommand{\spazio}{\textrm{ }}
\newcommand{\restr}{\upharpoonright}
\newtheorem{theorem}{Theorem}[section]
\newtheorem{lemma}[theorem]{Lemma}
\newtheorem{proposition}[theorem]{Proposition}
\newtheorem{coroll}[theorem]{Corollary}
\newtheorem{definition}[theorem]{Definition}
\newtheorem{notation}[theorem]{Notation}
\newtheorem{claim}[theorem]{Claim}
\begin{document}
\title{The Strong Tree Property at Successors of Singular Cardinals}


\author[Laura Fontanella ]{Laura Fontanella}
\urladdr{http://www.logique.jussieu.fr/$\sim$fontanella}
\address{ Equipe de Logique Math\'ematique,
Universit\'e Paris Diderot Paris 7, UFR de math\'ematiques case 7012, 
site Chevaleret, 75205 Paris Cedex 13, France}

\address{Kurt G\"{o}del Research Center for Mathematical Logic, 
University of Vienna, Department of Mathematics\\
W\"{a}hringer Strasse $25,$ Vienna $1090$ (Austria) 
}

\email{fontanl6@univie.ac.at}


\subjclass[2010]{03E55 }

\keywords{tree property, large cardinals, forcing.}

\date{9 february 2012}



\maketitle

\begin{abstract}{} An inaccessible cardinal is strongly compact if, and only if, it satisfies the strong tree property. We prove that if there is a model of $\ZFC$ with infinitely many supercompact cardinals, then there is a model of \ZFC where $\aleph_{\omega+1}$ has the strong tree property. Moreover, we prove that every successor of a singular limit of strongly compact cardinals has the strong tree property. 
\end{abstract}

\




\section{Introduction}

The strong tree property is a strong generalization of the usual tree property. Given a regular cardinal $\kappa,$ we say that $\kappa$ has the tree property when every $\kappa$-tree (i.e. every tree of height $\kappa$ with levels of size less than $\kappa$) has a branch of length $\kappa.$
K\"{o}nig's Lemma establishes that the tree property holds at $\aleph_0.$ On the other hand, $\aleph_1$ does not satisfy the tree property, and for larger regular cardinals whether or not they satisfy the tree property is independent from $\ZFC.$ It is well known that the tree property provide a combinatorial characterization of weak compactness. 

\medskip

\noindent {\bf Theorem:} (Erd\"{o}s and Tarski \cite{ErdosTarski} $1961$) Assume $\kappa$ is an inaccessible cardinal, then $\kappa$ is weakly compact if and only if it satisfies the tree property.

\medskip

Strongly compact and supercompact cardinals admit similar characterizations. 

\medskip

\noindent {\bf Theorem:} If $\kappa$ is an inaccessible cardinal, then 
\begin{enumerate}
\item $\kappa$ is strongly compact if and only if it satisfies the strong tree property (Jech \cite{Jech} $1973,$ Di Prisco - Zwicker \cite{DiPriscoZwicker} $1980$ and Donder - Weiss \cite{WeissPhd} $2010$); 
\item $\kappa$ is supercompact if and only if it satisfies the super tree property (Jech \cite{Jech} $1973,$ Magidor \cite{Magidor} $1974$ and Donder - Weiss \cite{WeissPhd} $2010$).\\
\end{enumerate}  

\medskip

\noindent The strong and super tree properties generalize the usual tree property to the combinatorics of 
$[\lambda]^{<\kappa},$ in fact they concern special structures known as \emph{$(\kappa,\lambda)$-trees} that can be seen as ``trees over $[\lambda]^{<\kappa}$" whose ``levels'' have size less than $\kappa$ (this notion will be defined in \S \ref{sec:maindef}). 
The super tree property implies the strong tree property, that entails the usual tree property in its turn.
While the previous characterizations date back to the early $1970$s, a systematic study of the strong and the super tree properties has only recently been undertaken by 
Weiss\footnote{In Weiss' terminology, the strong tree property at a regular cardinal $\kappa$ corresponds to the 
property $(\kappa, \lambda)$-TP for all $\lambda\geq \kappa,$ while the super tree property corresponds to $(\kappa, \lambda)$-ITP for all $\lambda\geq \kappa.$} who worked on theese properties in his 
Ph.D thesis \cite{WeissPhd} and proved that even small cardinals can consistently satisfy the strong and the super tree properties, if we assume large cardinals.\\ 

There is a huge literature concerning the construction of models of set theory in which several distinct regular cardinals satisfy the usual tree property. We list a few classical results of that sort. 

\begin{enumerate}
\item (Mitchell \cite{Mitchell72} $1972$) Let $\tau$ be a regular cardinal such that $\tau^{<\tau}= \tau.$ Assume there is a model of ZFC with a weakly compact cardinal, then there is a model of ZFC where $\tau^{++}$ has the tree property.
\item (Cummings and Foreman \cite{CummingsForeman} $1998$) Assume there is a model of ZFC with infinitely many supercompact cardinals, then there is a model of ZFC where every cardinal of the form $\aleph_n$ with $2\leq n<\omega$ has the tree property. 
\item (Magidor and Shelah \cite{MagidorShelah} $1996$) Assume there is a model of ZFC with an increasing sequence $\langle \lambda_n\rangle_{n<\omega}$ such that 
\begin{enumerate}
\item if $\lambda= \sup_{n\geq 0} \lambda_n,$ then $\lambda_n$ is $\lambda^+$-supercompact, for all $n>0;$
\item $\lambda_0$ is the critical point of an embedding $j: V\to M$ where $j(\lambda_0)= \lambda_1$ and ${}^{{\lambda}^+}M\subseteq M.$
\end{enumerate}
Then there is a model of ZFC where $\aleph_{\omega+1}$ has the tree property.
\item (Sinapova \cite{Sinapova} $2012$) Assume there is a model of ZFC with infinitely many supercompact cardinals, then there is a model of ZFC where $\aleph_{\omega+1}$ has the tree property.
\item (Neeman \cite{Neeman} $2012$) Assume there is a model of ZFC with infinitely many supercompact cardinals, then there is a model of ZFC where the tree property holds at every $\aleph_n$ with $n\geq 2$ and at $\aleph_{\omega+1}.$ 
\end{enumerate} 

All these results were oriented toward the construction of a model where the tree property holds simultaneously at every regular cardinal  --- whether such a model can be found is still an open question.  Some of these theorems can be generalized to the strong or the super tree property. In fact, Weiss proved that for every integer $n\geq 2,$ if we force with Mitchell's forcing over a supercompact cardinal, we get a model of set theory where even the super tree property holds at $\aleph_n.$ The author \cite{Fontanella, Fontanella2} (and independently Unger \cite{Unger}) proved that Weiss result can be generalized to get a model where all cardinals of the form 
$\aleph_n$ with $2\leq n<\omega$ simultaneously satisfy the super tree property, starting from infinitely many supercompact cardinals. Indeed, a forcing construction by Cummings and Foreman produces a model where all the $\aleph_n$'s satisfy the super tree property. We are going to prove from large cardinals that 
even $\aleph_{\omega+1}$ can consistently satisfy the strong tree property. More precisely we will prove the following theorem. 

\medskip

{\bf Theorem:} If there is a model of ZFC with infinitely many supercompact cardinals, then there is a model of ZFC where $\aleph_{\omega+1}$ has the strong tree property. 

\medskip

\noindent The proof of such theorem is motivated by Neeman's paper \cite{Neeman}.  
By generalizing a theorem by Magidor and Shelah \cite{MagidorShelah}, we will also prove the following result. 

\medskip

{\bf Theorem:} If $\nu$ is a singular limit of strongly compact cardinals, then the strong tree property 
holds at $\nu^+.$

\medskip

Moreover, we will weakened the hypothesis of the latter theorem by using a partition property satisfied by strongly compact cardinals. 

\section{Preliminaries and Notation}

It may be useful to recall some terminology. The main reference for basic set theory is \cite{Jech}, while we will refer to \cite{Kanamori} for large cardinals notions and to \cite{Kunen} for the forcing technique. We denote by 
$[A]^{<\kappa}$ the set of all subsets of $A$ of size less than $\kappa.$
We recall the definition of closed unbounded subset of $[A]^{<\kappa}$ (club) and stationary subset of 
$[A]^{<\kappa}.$ 

\begin{definition} Assume $\kappa$ is a cardinal, $A$ is a set of size $\geq \kappa$ and $C\subseteq [A]^{<\kappa}.$
\begin{enumerate}
\item $C$ is \emph{unbounded}\index{unbounded} if for every $x\in [A]^{<\kappa}$ there exists $y\in C$ such that $x\subseteq y.$
\item $C$ is \emph{closed}\index{closed} if for any $\subseteq$-increasing chain 
$\langle x_{\gamma}\rangle_{\gamma<\alpha}$ of sets in $C,$ with $\alpha<\kappa,$ the union $\bigcup_{\gamma<\alpha} x_{\gamma}\in C.$
\item $C$ is a \emph{club}\index{club!-- of $[A]^{<\kappa}$} if it is closed and unbounded. 
\item $C$ is \emph{stationary}\index{stationary!-- subset of $[A]^{<\kappa}$} if $S$ has non-empty intersection with every club of $[A]^{<\kappa}.$  
\end{enumerate}  
\end{definition}

We will often use the following lemma. 

\begin{lemma}(Pressing Down Lemma)\index{Lemma!Pressing Down --} If $f$ is a regressive function on a stationary set $S\subseteq [A]^{<\kappa}$ (i.e. $f(x)\in x,$ for every non empty $x\in S$), then there exists a stationary set $T\subseteq S$ such that $f$ is constant on $T.$
\end{lemma}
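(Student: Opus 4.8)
The plan is to argue by contradiction and reduce everything to the fact that a suitable ``diagonal intersection'' of clubs in $[A]^{<\kappa}$ is again a club; here, as throughout the paper, $\kappa$ is tacitly regular and uncountable. We may assume that every element of $S$ is nonempty, since otherwise either $S\setminus\{\emptyset\}$ is stationary and we replace $S$ by it, or $\{\emptyset\}\subseteq S$ and $T=\{\emptyset\}$ already witnesses the conclusion trivially.

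Suppose then that no stationary $T\subseteq S$ makes $f$ constant. For each $a\in A$ put $S_a=\{x\in S : f(x)=a\}$; by assumption each $S_a$ is non-stationary, so fix a club $C_a\subseteq[A]^{<\kappa}$ with $C_a\cap S_a=\emptyset$. Now I would invoke the standard description of clubs of $[A]^{<\kappa}$ in terms of closure points of functions: for $\kappa$ regular uncountable, a subset of $[A]^{<\kappa}$ contains a club if and only if it contains $\{x\in[A]^{<\kappa} : F[{}^{<\kappa}x]\subseteq x\}$ for some $F:{}^{<\kappa}A\to A$. For each $a$ fix such an $F_a$ whose set of closure points is contained in $C_a$, and amalgamate the $F_a$ into a single $F$, for instance by setting $F(\langle a\rangle^\frown t)=F_a(t)$ for $a\in A$ and $t\in{}^{<\kappa}A$ (and $F$ arbitrary on sequences not of this form). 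Then $D:=\{x\in[A]^{<\kappa} : F[{}^{<\kappa}x]\subseteq x\}$ contains a club, and, crucially, every $x\in D$ is a closure point of $F_a$ for each $a\in x$, hence lies in $C_a$ whenever $a\in x$. Alternatively, one can skip the closure-point machinery and prove directly, bootstrapping with the regularity of $\kappa$ for the closure step, that $\{x : x\in C_a\text{ for all }a\in x\}$ contains a club.

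Finally, $D\cap S$ is stationary, since it contains the intersection of the stationary set $S$ with a club; in particular it has a (nonempty) element $x$. Let $a=f(x)$: by regressivity $a\in x$, so $x\in S_a$, while $x\in D$ together with $a\in x$ gives $x\in C_a$, contradicting $C_a\cap S_a=\emptyset$. The only substantive step here is the club-ness of $D$, so I expect the characterization of club-containing subsets of $[A]^{<\kappa}$ via closure points of functions ${}^{<\kappa}A\to A$, together with the verification that the amalgamated $F$ does what is claimed, to be the main point of the argument — and it is precisely there that the regularity of $\kappa$ is needed.
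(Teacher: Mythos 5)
The paper does not actually prove this lemma: it is quoted and the proof is delegated to Jech's book (Theorem 8.24 there). Your overall strategy is precisely the standard one found there --- assume each fiber $S_a=\{x\in S: f(x)=a\}$ is non-stationary, pick clubs $C_a$ with $C_a\cap S_a=\emptyset$, show that the diagonal intersection $\triangle_{a\in A}C_a=\{x: x\in C_a \text{ for all } a\in x\}$ contains a club, and derive a contradiction from a nonempty $x\in S\cap\triangle_a C_a$ via $f(x)\in x$. The final pressing-down step and the amalgamation idea are fine.

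The problem is the tool you lean on for the one substantive step. The claimed characterization --- that $C\subseteq[A]^{<\kappa}$ contains a club iff it contains $\{x: F[{}^{<\kappa}x]\subseteq x\}$ for some $F:{}^{<\kappa}A\to A$ --- is false for a general regular uncountable $\kappa$. The issue is that for $x\in[A]^{<\kappa}$ the set ${}^{<\kappa}x$ can have cardinality $\geq\kappa$ (already for $\kappa=\omega_1$ and $x$ countably infinite, $|{}^{\omega}x|=2^{\aleph_0}\geq\aleph_1$), so the closing-off construction producing closure points of size $<\kappa$ breaks down; one can in fact choose $F$ (e.g.\ $A=\kappa=\omega_1$ and $F(s)=\sup(\mathrm{ran}(s))+1$ on infinite sequences) so that \emph{no} nonempty $x\in[A]^{<\kappa}$ is closed under $F$, whence $\{x:F[{}^{<\kappa}x]\subseteq x\}$ is not a club and the ``if'' direction of your biconditional fails. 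Your argument would go through if $\kappa$ were inaccessible, but not in the generality the lemma is stated and used. The correct general characterization (Menas) uses functions $F:[A]^{<\omega}\to[A]^{<\kappa}$ on \emph{finite} subsets, with $x$ ``closed'' meaning $F(e)\subseteq x$ for all finite $e\subseteq x$; with that version your amalgamation works verbatim. Better still, your parenthetical fallback is the right move and is exactly Jech's route: prove directly that $\triangle_{a\in A}C_a$ is a club. Closedness is immediate from closedness of each $C_a$ on a tail of the chain; for unboundedness, given $y_0$ build an increasing $\omega$-chain where $y_{n+1}\supseteq y_n\cup\bigcup_{a\in y_n}z^n_a$ with $z^n_a\in C_a$ and $y_n\subseteq z^n_a$ --- regularity of $\kappa$ keeps $|y_{n+1}|<\kappa$, and for each $a\in\bigcup_n y_n$ the sets $z^n_a$ form an increasing chain in $C_a$ with union $\bigcup_n y_n$. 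Had you carried out that computation rather than routing through the (incorrect) closure-point characterization, the proof would be complete.
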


For a proof of that lemma see \cite[Theorem 8.24]{Jechbook}.\\ 

Given a forcing $\mathbb{P}$ and conditions $p,q\in \mathbb{P},$ we use $p\leq q$ in the sense that $p$ is stronger than $q.$ 
Assume that $\mathbb{P}$ is a forcing notion in a model $V,$ we will use $V^{\mathbb{P}}$ to denote the class of $\mathbb{P}$-names. If $G\subseteq \mathbb{P}$ is a generic filter over $V,$ then $V[G]$ denotes the generic extension of $V$ determined by $G.$ If $a\in V^{\mathbb{P}}$ and $G\subseteq \mathbb{P}$ is generic over $V,$ then 
$a^G$ denotes the interpretation of $a$ in $V[G].$ 
Every element $x$ of the ground model $V$ is represented in a canonical way by a name $\check{x}.$ However, to simplify the notation, we will use just $x$ instead of $\check{x}$ in forcing formulas.

\begin{definition} Given a forcing $\mathbb{P}$ and a cardinal $\kappa,$ we say that 
\begin{enumerate}
\item $\mathbb{P}$ is \emph{$\kappa$-closed}\index{$\kappa$-closed} if and only if every decreasing sequence of conditions of $\mathbb{P}$ of size less than $\kappa$ has an infimum; 
\item $\mathbb{P}$ is \emph{$\kappa$-c.c.}\index{$\kappa$-c.c.} when every 
antichain of $\mathbb{P}$ has size less than $\kappa;$ 
\item $\mathbb{P}$ has the \emph{$\kappa$-covering property} if $\mathbb{P}$ preserves $\kappa$ as a cardinal and for every filter $G\subseteq \mathbb{P}$ generic over $V,$ every set $X\subseteq V$ in $V[G]$ of cardinality less than $\kappa$ is contained in a set 
$Y\in V$ of cardinality less than $\kappa$ in $V.$
\end{enumerate}
\end{definition}

We will use the following forcing notions.

\begin{definition}(The L\'evy Collapse) Let $\kappa<\lambda$ be two cardinals with $\kappa$ regular, 
\begin{enumerate}
\item we denote by $\Coll(\kappa, \lambda)$ the set $\{p: \kappa\to \lambda;\ \vert \dom(p)\vert<\kappa  \}$
ordered by reverse inclusion; 
\item if $\lambda$ is inaccessible, then $\Coll(\kappa, <\lambda):= \Pi_{\alpha<\lambda} \Coll(\kappa, \alpha).$ 
\end{enumerate}
\end{definition}

\begin{lemma}(L\'evy) Let $\kappa<\lambda$ be two cardinals with $\kappa$ regular, then 
$\Coll(\kappa, \lambda)$ collapses $\lambda$ onto $\kappa,$ i.e. $\lambda$ has cardinality $\kappa$ in the generic extension. Moreover, 
\begin{enumerate}
\item every cardinal $\alpha\leq \kappa$ in $V$ remains a cardinal in $V[G];$
\item if $\lambda^{<\kappa}= \lambda,$ then every cardinal $\alpha>\lambda$ remains a cardinal in the extension. 
\end{enumerate}
\end{lemma}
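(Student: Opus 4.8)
The plan is to verify three things about $\Coll(\kappa,\lambda)$: that it is $\kappa$-closed, that its generic adds a surjection of $\kappa$ onto $\lambda$, and --- under the extra hypothesis $\lambda^{<\kappa}=\lambda$ --- that it is $\lambda^+$-c.c.; the three clauses of the lemma then fall out in turn.

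First I would show $\Coll(\kappa,\lambda)$ is $\kappa$-closed. If $\langle p_\gamma\rangle_{\gamma<\delta}$ is a $\leq$-decreasing sequence of conditions with $\delta<\kappa$, then $p:=\bigcup_{\gamma<\delta}p_\gamma$ is a partial function from $\kappa$ to $\lambda$ whose domain, being a union of fewer than $\kappa$ sets each of size $<\kappa$, has size $<\kappa$ because $\kappa$ is regular; hence $p\in\Coll(\kappa,\lambda)$, and since any lower bound of the sequence must extend $p$, the condition $p$ is the infimum. The standard consequence is that $\Coll(\kappa,\lambda)$ adds no new $<\kappa$-sequence of ground-model ordinals, so every cardinal $\alpha\leq\kappa$ of $V$ remains a cardinal in $V[G]$; in particular $\kappa$ itself survives, which is exactly what makes the phrase ``$\lambda$ has cardinality $\kappa$'' meaningful. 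This yields clause~(1).

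Next, for the collapse, fix a generic $G$ and set $f:=\bigcup G$, a partial function from $\kappa$ to $\lambda$. For each $\xi<\lambda$ the set $D_\xi:=\{p\in\Coll(\kappa,\lambda): \xi\in{\rm ran}(p)\}$ is dense, since any $p$ has $\vert\dom(p)\vert<\kappa$ and therefore misses some $\eta<\kappa$, and then $p\cup\{(\eta,\xi)\}$ extends $p$ and lies in $D_\xi$. By genericity ${\rm ran}(f)=\lambda$, so $f$ is (the restriction to a subset of $\kappa$ of) a surjection onto $\lambda$, whence $\vert\lambda\vert^{V[G]}\leq\kappa$; combined with the survival of $\kappa$ this gives $\vert\lambda\vert^{V[G]}=\kappa$.

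Finally, assume $\lambda^{<\kappa}=\lambda$. Counting conditions: there are $\kappa^{<\kappa}\leq\lambda^{<\kappa}=\lambda$ possible domains in $[\kappa]^{<\kappa}$, and for each domain $d$ there are at most $\lambda^{\vert d\vert}\leq\lambda^{<\kappa}=\lambda$ functions $d\to\lambda$, so $\vert\Coll(\kappa,\lambda)\vert=\lambda$. A forcing of size $\lambda$ has no antichain of size $>\lambda$, hence $\Coll(\kappa,\lambda)$ is $\lambda^+$-c.c. and therefore preserves every cardinal $\geq\lambda^+$, which is clause~(2). There is no deep obstacle here --- the statement is classical --- but it is worth keeping in mind that the two halves of ``Moreover'' rest on genuinely different features of the forcing, closure below $\kappa$ versus the chain condition above $\lambda$, and that the chain-condition half really needs the arithmetic hypothesis $\lambda^{<\kappa}=\lambda$ in order to bound $\vert\Coll(\kappa,\lambda)\vert$ by $\lambda$.
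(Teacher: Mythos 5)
Your argument is correct and is exactly the standard proof: $\kappa$-closure (via regularity of $\kappa$) gives preservation of cardinals $\leq\kappa$, the density argument gives the surjection onto $\lambda$, and under $\lambda^{<\kappa}=\lambda$ the counting argument gives $\vert\Coll(\kappa,\lambda)\vert=\lambda$, hence the $\lambda^+$-c.c.\ and preservation above $\lambda$. The paper does not prove this lemma itself but cites Jech (Lemma 15.21), whose proof proceeds along the same lines, so there is nothing to add.
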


For a proof of that lemma see for example \cite[Lemma 15.21]{Jech}.

\begin{lemma}(L\'evy) If $\kappa$ is regular and $\lambda>\kappa$ is inaccessible. Then 
for every $G\subseteq \Coll(\kappa, <\lambda)$ generic over $V,$ 
\begin{enumerate}
\item every $\alpha$ such that $\kappa\leq \alpha<\lambda$ has cardinality $\kappa$ in $V[G];$
\item every cardinal $\leq \kappa$ and every cardinal $\geq \lambda$ remains a cardinal in $V[G].$
\end{enumerate}
Hence $V[G]\models \lambda= \kappa^+.$
\end{lemma}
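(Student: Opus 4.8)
The plan is to treat the three ranges of cardinals separately --- those $\le\kappa$, those strictly between $\kappa$ and $\lambda$, and those $\ge\lambda$ --- and then read off the last assertion. First I would record that $\Coll(\kappa,<\lambda)$ is $\kappa$-closed: its conditions have support of size $<\kappa$, and since $\kappa$ is regular the coordinatewise union of a decreasing sequence of length $<\kappa$ again has support of size $<\kappa$ and restricts on each coordinate to a decreasing sequence in the $\kappa$-closed poset $\Coll(\kappa,\beta)$, hence is a condition below all of them. Being $\kappa$-closed, the forcing adds no new $<\kappa$-sequences of ordinals, so every cardinal $\le\kappa$ is preserved and $\kappa$ stays regular.

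The step that genuinely uses the inaccessibility of $\lambda$ is the $\lambda$-chain condition. Given a putative antichain $A$ of size $\lambda$, the supports of its elements form a family of $\lambda$-many subsets of $\lambda$ of size $<\kappa$; since $\lambda$ is regular and $\mu^{<\kappa}<\lambda$ for every $\mu<\lambda$, the $\Delta$-system lemma yields a subfamily of size $\lambda$ whose supports form a $\Delta$-system with root $r$, $|r|<\kappa$. Because $\lambda$ is inaccessible, the number of possible restrictions of a condition to $r$ is at most $\prod_{\beta\in r}|\Coll(\kappa,\beta)|<\lambda$, so I can thin $A$ further to a set of size $\lambda$ all of whose elements agree on $r$; any two such conditions are then compatible, contradicting that $A$ is an antichain. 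Hence $\Coll(\kappa,<\lambda)$ is $\lambda$-c.c., so it preserves all cardinals $\ge\lambda$; this, together with the previous paragraph, gives clause (2).

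For clause (1), fix $\alpha$ with $\kappa\le\alpha<\lambda$; the case $\alpha=\kappa$ is trivial, so assume $\kappa<\alpha$. Factor $\Coll(\kappa,<\lambda)\cong\Coll(\kappa,\alpha)\times R$, where $R$ is the $<\kappa$-support product of the remaining factors. By the preceding lemma of L\'evy, forcing with $\Coll(\kappa,\alpha)$ collapses $\alpha$ onto $\kappa$; and $R$ is $\kappa$-closed in $V$, hence still $\kappa$-closed after that forcing since $\Coll(\kappa,\alpha)$ adds no $<\kappa$-sequences, so it preserves $\kappa$ and cannot undo the collapse. Thus $|\alpha|=\kappa$ in $V[G]$.

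Putting these together: in $V[G]$ every cardinal $\le\kappa$ and every cardinal $\ge\lambda$ survives, $\kappa$ itself is still a cardinal, and clause (1) shows there is no cardinal of $V[G]$ strictly between $\kappa$ and $\lambda$; therefore $\lambda$ is the cardinal successor of $\kappa$ in $V[G]$, i.e. $V[G]\models\lambda=\kappa^{+}$. I expect the only delicate point to be the $\Delta$-system computation for the $\lambda$-chain condition --- specifically checking that $\lambda$ inaccessible gives both $\mu^{<\kappa}<\lambda$ for $\mu<\lambda$ and the bound $\prod_{\beta\in r}|\Coll(\kappa,\beta)|<\lambda$; the closure and density arguments elsewhere are routine.
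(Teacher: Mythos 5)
Your proof is correct, and it is essentially the standard argument the paper points to (it gives no proof of its own, citing Jech, Theorem 15.22): $\kappa$-closure handles cardinals $\leq\kappa$, the $\Delta$-system argument using inaccessibility gives the $\lambda$-c.c.\ and hence preservation above $\lambda$, and factoring through $\Coll(\kappa,\alpha)$ collapses each $\alpha<\lambda$. No gaps worth flagging; the delicate cardinal-arithmetic checks you mention ($\mu^{<\kappa}<\lambda$ and the bound on restrictions to the root) do follow directly from $\lambda$ being regular and strong limit.
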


For a proof of that lemma see for example \cite[Theorem 15.22]{Jech}.

We will assume familiarity with the theory of large cardinals and elementary embeddings, as developed for example in \cite{Kanamori}. We recall the definition of strongly compact and supercompact cardinals. 

\begin{definition} Let $\kappa$ be a regular uncountable cardinal, 
\begin{enumerate}
\item $\kappa$ is \emph{strongly compact} if and only if, every $\kappa$-complete filter on a set $S$ can be extended to a $\kappa$-complete ultrafilter on $S;$
\item $\kappa$ is \emph{supercompact} if and only if, for every cardinal $\lambda\geq \kappa,$ there exists an elementary embedding $j: V\to M$ with critical point $\kappa$ such that $j(\kappa)>\lambda$ and $M$ is closed by subsets of size $\lambda.$
\end{enumerate}
\end{definition}

The following two theorems will be deeply used in this paper. 

\begin{lemma} (Laver) \cite{Laver} If $\kappa$ is a supercompact cardinal, then there exists $L: \kappa \to V_{\kappa}$ such that: for all $\lambda,$ for all $x\in H_{\lambda^+},$ there is an elementary embedding $j: V\to M$ with critical point $\kappa$
such that $j(\kappa)>\lambda,$ ${}^\lambda M\subseteq M$ and $j(L)(\kappa)= x.$
\end{lemma}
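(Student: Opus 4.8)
The plan is to obtain $L$ by a transfinite recursion that ``anticipates counterexamples'', in the style of Laver's original argument, and then to argue by contradiction using an elementary embedding. Introduce some terminology: for $\alpha\le\kappa$, a function $f\colon\alpha\to V_\alpha$ and a cardinal $\gamma$, say $f$ is \emph{$\gamma$-good at $\alpha$} if for every $x\in H_{\gamma^+}$ there is an elementary embedding $j\colon V\to M$ with critical point $\alpha$, $j(\alpha)>\gamma$, ${}^\gamma M\subseteq M$ and $j(f)(\alpha)=x$; call $f$ \emph{good at $\alpha$} if it is $\gamma$-good at $\alpha$ for every $\gamma$. The lemma asks for an $L\colon\kappa\to V_\kappa$ that is good at $\kappa$. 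I will also use the obvious monotonicity: if $f$ is not $\gamma_0$-good at $\alpha$, then it is not $\gamma$-good at $\alpha$ for all $\gamma\ge\gamma_0$ (an embedding witnessing the $\gamma$-clause for $x\in H_{\gamma_0^+}$ also witnesses the $\gamma_0$-clause for $x$).

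First I would define $L$ by recursion on $\alpha<\kappa$, fixing in advance a wellordering $<^{*}$ of $V_\kappa$. Given $L\restr\alpha$: if $L\restr\alpha$ is $\gamma$-good at $\alpha$ for all cardinals $\gamma<\alpha$, set $L(\alpha)=\emptyset$; otherwise let $\gamma_\alpha<\alpha$ be the least cardinal for which this fails and let $L(\alpha)$ be the $<^{*}$-least $x\in H_{\gamma_\alpha^+}$ (contained in $V_\kappa$, since $\gamma_\alpha<\alpha<\kappa$ and $\kappa$ is inaccessible) such that no elementary embedding $j\colon V\to M$ with critical point $\alpha$, $j(\alpha)>\gamma_\alpha$ and ${}^{\gamma_\alpha}M\subseteq M$ satisfies $j(L\restr\alpha)(\alpha)=x$. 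The point of looking only at levels \emph{below the stage} $\alpha$ is that this bound is fixed by any embedding with critical point $\kappa$, so the recursion will be computed the same way inside such an embedding's target; one should check --- this is where care is needed --- that anticipating counterexamples only at levels $<\kappa$ is in fact enough, i.e.\ that a function $\kappa\to V_\kappa$ which is $\gamma$-good at $\kappa$ for all $\gamma<\kappa$ is automatically good at $\kappa$ (using that each relevant embedding is coded by a measure of size $<\kappa$, so is seen by all sufficiently closed models, together with a reflection argument).

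Now suppose for contradiction that $L$ is not good at $\kappa$. By the care taken above we may assume $L$ fails to be $\gamma_0$-good at $\kappa$ for some cardinal $\gamma_0<\kappa$; take $\gamma_0$ least such and $x_0$ the $<^{*}$-least witness. Pick a strong limit cardinal $\bar\gamma>2^{\gamma_0^{<\kappa}}$ (so that the model $M$ below sees every measure on $[\gamma]^{<\kappa}$ for $\gamma\le\gamma_0$ and computes $H_{\gamma_0^+}$ correctly), and apply the $\bar\gamma$-supercompactness of $\kappa$ to get $j\colon V\to M$ with critical point $\kappa$, $j(\kappa)>\bar\gamma$ and ${}^{\bar\gamma}M\subseteq M$. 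In $M$, $j(L)$ is defined by the same recursion. An induction on $\alpha<\kappa$ shows $j(L)\restr\kappa=L$: at stage $\alpha$ the recursion consults only levels $<\alpha<\kappa$, and $\bar\gamma$-closure of $M$ guarantees that $M$ and $V$ agree on which functions are $\gamma$-good at $\alpha$ for $\gamma<\alpha$ and select the same witnesses, since the relevant embeddings are coded by sets of size $<\kappa$. Then at stage $\kappa$, again by $\bar\gamma$-closure, $M$ agrees that $L$ fails to be $\gamma_0$-good at $\kappa$ with $\gamma_0$ least, so the recursion in $M$ yields $j(L)(\kappa)=x_0$. But then $j$ itself is an elementary embedding $V\to M$ with critical point $\kappa$, $j(\kappa)>\bar\gamma>\gamma_0$, ${}^{\gamma_0}M\subseteq M$ and $j(L)(\kappa)=x_0$, contradicting the choice of $x_0$. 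Hence the recursion-defined $L$ is good at $\kappa$, as required.

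The step I expect to be the main obstacle is making the recursion ``stable under the embedding'': precisely, (i) the reflection claim that recording failures only at levels below the current stage still detects every failure at $\kappa$ --- that $\gamma$-goodness at $\kappa$ for all $\gamma<\kappa$ implies goodness at $\kappa$ --- and (ii) that the target $M$ of a $\bar\gamma$-supercompactness embedding runs the recursion exactly as $V$ does up to and including stage $\kappa$, which rests on the fact that all the elementary embeddings the recursion quantifies over, at the levels it quantifies over, are coded by sets small enough to lie in $M$ once $\bar\gamma$ is chosen large enough, and that the relevant ultrapowers and $H_{\gamma^+}$'s are computed correctly there (via derived normal fine measures). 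Granting this, the verification above and the final contradiction are routine, as are the bookkeeping points (that the witnesses lie in $V_\kappa$, elementarity, and so on).
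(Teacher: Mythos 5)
Your construction caps the levels inspected at each stage: at stage $\alpha$ the recursion only asks whether $L\restr\alpha$ is $\gamma$-good at $\alpha$ for cardinals $\gamma<\alpha$, and consequently, when you apply $j$ and compute $j(L)(\kappa)$ inside $M$, the stage-$\kappa$ step only ranges over $\gamma<\kappa$. To bridge this you invoke claim (i): that a function which is $\gamma$-good at $\kappa$ for all $\gamma<\kappa$ is automatically good at $\kappa$. This claim is false, and it is exactly where the content of Laver's theorem lives. Note that ``$\gamma$-good at $\kappa$ for all $\gamma<\kappa$'' only concerns targets $x\in\bigcup_{\gamma<\kappa}H_{\gamma^+}=H_\kappa=V_\kappa$ and embeddings with closure below $\kappa$ (which any measure ultrapower on $\kappa$ already provides), so it is essentially a measurable-cardinal guessing property, whereas goodness at $\kappa$ requires guessing sets of arbitrarily large hereditary size via $\lambda$-closed embeddings. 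Concretely: let $L_0$ be a genuine Laver function and put $L'(\alpha)=L_0(\alpha)$ if $L_0(\alpha)\in H_\alpha$, and $L'(\alpha)=\emptyset$ otherwise. For any embedding $j$ with critical point $\kappa$ one has $j(L')(\kappa)\in H_\kappa^M\cup\{\emptyset\}=V_\kappa\cup\{\emptyset\}$, so $L'$ cannot guess $x=\kappa$ (or $\kappa^+$, etc.) and is not even $\kappa$-good at $\kappa$; yet $L'$ is $\gamma$-good at $\kappa$ for every $\gamma<\kappa$, since for $x\in V_\kappa$ any embedding with $j(L_0)(\kappa)=x$ also has $j(L')(\kappa)=x$. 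Your proposed justification (``each relevant embedding is coded by a measure of size $<\kappa$, plus reflection'') addresses absoluteness of the small-level clauses between $V$ and $M$, which is the unproblematic part, but it does nothing for targets at levels $\lambda\geq\kappa$; and you give no argument special to the recursion-defined $L$ that would rescue the implication (nothing in the construction ever engages with large $x$). So the contradiction step cannot get started: if $L$ fails only at some level $\lambda\geq\kappa$, your recursion run in $M$ at stage $\kappa$ never sees that failure, and $j(L)(\kappa)$ need not be a counterexample.

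The repair is to let the recursion anticipate failures at \emph{all} levels, not levels below the current stage, so that after applying $j$ the stage-$\kappa$ computation in $M$ ranges over levels up to $j(\kappa)$, which lies above the true least failure level. This is how Laver's original argument (which the paper simply cites, giving no proof) proceeds: assuming no such $L$ exists, every $f\colon\kappa\to V_\kappa$ has a least failure level $\lambda_f$; one sets $\lambda^*=\sup_f\lambda_f$, takes $j$ witnessing $2^{(\lambda^*)^{<\kappa}}$-supercompactness, and arranges the recursion (with the bookkeeping needed to keep its values in $V_\kappa$, e.g.\ only recording a witness when it is small enough, a condition that relativizes in $M$ to ``small relative to $j(\kappa)$'' and is therefore satisfied at stage $\kappa$) so that $M$, being $2^{\lambda^{<\kappa}}$-closed and hence containing all normal measures on the relevant $P_\kappa\lambda'$ for $\lambda'\leq\lambda^*$, computes the true least failure level $\lambda$ and a true witness $x$ at stage $\kappa$; then $j$ itself is a $\lambda$-closed embedding with $j(L)(\kappa)=x$, the desired contradiction. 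Your other steps are fine and in places can be simplified ($j(L)\restr\kappa=L$ is immediate from $j\restr V_\kappa=\mathrm{id}$, no induction needed), but as written the argument proves only a $V_\kappa$-guessing property, not the lemma.
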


\begin{lemma} (Silver) Let $j: M\to N$ be an elementary embedding between inner models of {\rm ZFC}. Let $\mathbb{P}\in M$ be a forcing and suppose that $G$ is $\mathbb{P}$-generic over $M,$ $H$ is $j(\mathbb{P})$-generic over $N,$ and $j[G]\subseteq H.$ Then there is a unique $j^*: M[G]\to N[H]$ such that $j^*\restr M= j$ and $j^*(G)= H.$ 
\end{lemma}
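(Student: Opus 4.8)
The plan is to transfer $\mathbb{P}$-names along $j$. Since $\mathbb{P}\in M$ and $G$ is generic over $M$, every element of $M[G]$ has the form $\tau^G$ for some $\mathbb{P}$-name $\tau\in M$; by elementarity $j(\tau)$ is a $j(\mathbb{P})$-name in $N$, so $(j(\tau))^H$ is well defined in $N[H]$. I would \emph{define} $j^*(\tau^G):=(j(\tau))^H$ and then check that this is independent of the chosen name, that it is (fully) elementary, and that it satisfies $j^*\restr M=j$ and $j^*(G)=H$; uniqueness will fall out of the definability of the interpretation map.

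The heart of the matter is a single transfer principle: for every formula $\varphi(v_1,\dots,v_n)$ and all $\mathbb{P}$-names $\tau_1,\dots,\tau_n\in M$, if $M[G]\models\varphi(\tau_1^G,\dots,\tau_n^G)$ then $N[H]\models\varphi((j(\tau_1))^H,\dots,(j(\tau_n))^H)$. To see this, apply the forcing theorem in $M$ to get $p\in G$ with $p\force^{M}_{\mathbb{P}}\varphi(\tau_1,\dots,\tau_n)$; applying $j$ and using its elementarity yields $j(p)\force^{N}_{j(\mathbb{P})}\varphi(j(\tau_1),\dots,j(\tau_n))$; since $p\in G$ and $j[G]\subseteq H$ we have $j(p)\in H$, so the forcing theorem in $N$ gives $N[H]\models\varphi((j(\tau_1))^H,\dots,(j(\tau_n))^H)$. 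Applying this to the formula $v_1=v_2$ shows that $j^*$ is well defined (so it is a genuine function on $M[G]$), and applying it to $\varphi$ and to $\neg\varphi$ and invoking classical logic in each model upgrades the implication to the equivalence $M[G]\models\varphi\iff N[H]\models\varphi$; hence $j^*$ is elementary.

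For the normalization clauses: given $x\in M$, the canonical name $\check x\in M$ has $\check x^G=x$, and $j(\check x)$ is the canonical name for $j(x)$ in $N$, so $j^*(x)=j(x)$ and thus $j^*\restr M=j$. Likewise, fixing a canonical $\mathbb{P}$-name $\dot G\in M$ for the generic filter we have $\dot G^G=G$, and by elementarity $j(\dot G)$ is the canonical $j(\mathbb{P})$-name for the generic filter over $N$, so $(j(\dot G))^H=H$ and hence $j^*(G)=H$. For uniqueness, suppose $k\colon M[G]\to N[H]$ is elementary with $k\restr M=j$ and $k(G)=H$. The assignment $(\tau,G)\mapsto\tau^G$ is definable by $\in$-recursion, uniformly over transitive models of $\ZFC$, so elementarity of $k$ forces $k(\tau^G)$ to be the interpretation, computed in $N[H]$, of $k(\tau)=j(\tau)$ by $k(G)=H$; that is, $k(\tau^G)=(j(\tau))^H=j^*(\tau^G)$ for every $\mathbb{P}$-name $\tau\in M$. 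Since such $\tau^G$ exhaust $M[G]$, we get $k=j^*$.

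The one point I expect to require care — rather than a real obstacle — is the bookkeeping when $M$ and $N$ are proper classes: there ``$j$ elementary'' is used through the fact that for a fixed formula the relation $p\force_{\mathbb{P}}\varphi(\vec\tau)$ is definable and, for names and conditions of bounded rank, absolute between $M$ and a sufficiently large $V_\alpha^M$, so that elementarity of $j$ does apply to each instance used above. These are standard facts about the definability of forcing (see \cite{Kunen}), and once they are in place the argument above goes through verbatim; the hypothesis $j[G]\subseteq H$ is used exactly once, to pass from $p\in G$ to $j(p)\in H$.
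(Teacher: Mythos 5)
Your proposal is correct and follows exactly the paper's route: the paper's proof consists of the single observation that, given $j[G]\subseteq H$, the map $j^*(\dot{x}^G)=j(\dot{x})^H$ is well defined and has the required properties, and your argument simply fills in the standard details (well-definedness and elementarity via the forcing theorem, the check-name and canonical-generic-name computations, uniqueness, and the definability caveat for class models).
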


\begin{proof} If $j[G]\subseteq H,$ then the map $j^*(\dot{x}^{G})= j(\dot{x})^{H}$ is well defined and satisfies the required properties. \end{proof}

\section{The Strong and the Super Tree Properties}\label{sec:maindef}

In this section we introduce the strong and super tree properties. Although the main results presented in this paper do not concern the super tree property (just the strong tree property), for the sake of completeness we include the definition of this property as well. In order to define the strong and the super tree properties, we need to introduce the notion of 
$(\kappa,\lambda)$-tree\footnote{In Weiss Phd-thesis \cite{WeissPhd} $(\kappa, \lambda)$-trees were called \emph{$\mathscr{P}_{\kappa}\lambda$-thin lists}.}. 

\begin{definition}\label{main definition} Given a regular cardinal $\kappa\geq \omega_2$ and an ordinal $\lambda\geq \kappa,$ a \emph{$(\kappa, \lambda)$-tree}\index{$(\kappa, \lambda)$-tree} is a set $F$ satisfying the following properties:  
\begin{enumerate}
\item for every $f\in F,$ $f: X\to 2,$ for some $X\in [\lambda]^{<\kappa}$
\item for all $f\in F,$ if $X\subseteq \dom(f),$ then $f\restr X\in F;$
\item the set $\lev_X(F):= \{f\in F;\spazio \dom(f)=X \}$ is non empty, for all $X\in [\lambda]^{<\kappa};$
\item $\vert \lev_X(F) \vert<\kappa ,$ for all $X\in [\lambda]^{<\kappa}.$
\end{enumerate}
\end{definition}

The elements of a $(\kappa, \lambda)$-tree are called \emph{nodes}. Note that, despite the name, 
a $(\kappa, \lambda)$-tree is not a tree. In fact for a given node $f$ on some level $\lev_X,$ the set of all its \emph{predecessors} is $\{f\restr Y;\ Y\subseteq X\}$ and it is not well ordered. So the main difference between a $\kappa$-tree and a $(\kappa, \lambda)$-tree is that in the former the levels are indexed by ordinals which are well ordered, while in the latter we have a level for every set in $[\lambda]^{<\kappa}$ which is not even linearly ordered. As usual, when there is no ambiguity, we will simply write $\lev_X$ instead of $\lev_X(F).$ 

\begin{definition}\label{branches} Given a regular $\kappa\geq \omega_2,$ an ordinal $\lambda\geq \kappa$ and a $(\kappa, \lambda)$-tree $F,$
\begin{enumerate}
\item  a \emph{cofinal branch}\index{branch! cofinal -- of a $(\kappa, \lambda)$-tree} for $F$ is a function $b: \lambda \to 2$ such that $b\restr X\in \lev_X(F),$ for all $X\in[\lambda]^{<\kappa};$
\item an \emph{$F$-level sequence}\index{$F$-level sequence}\index{level sequence} is a function $D: [\lambda]^{<\kappa}\to F$ such that for every $X\in [\lambda]^{<\kappa},$ $D(X)\in \lev_X(F);$
\item given an $F$-level sequence $D,$ an \emph{ineffable branch} for $D$ is a cofinal branch $b: \lambda \to 2$ such that
$\{X\in [\lambda]^{<\kappa};\spazio b\restr X= D(X) \}$ is stationary. 
\end{enumerate}
\end{definition}

\begin{definition}\label{def: TP ITP} Given a regular cardinal $\kappa\geq \omega_2$ and an ordinal $\lambda\geq \kappa,$ 
\begin{enumerate}
\item $(\kappa, \lambda)$-\TP \index{$(\kappa, \lambda)$-TP} holds if every $(\kappa, \lambda)$-tree has a cofinal branch;
\item $(\kappa, \lambda)$-\ITP \index{$(\kappa, \lambda)$-ITP} holds if for every $(\kappa, \lambda)$-tree $F$ and for every $F$-level sequence $D,$ there is an an ineffable branch for $D;$
\item we say that $\kappa$ satisfies the \emph{strong tree property}\index{strong tree property} if $(\kappa, \mu)$-\TP holds, for all $\mu\geq \kappa;$
\item we say that $\kappa$ satisfies the \emph{super tree property}\index{super tree property} if 
$(\kappa,\mu)$-\ITP holds, for all $\mu\geq \kappa;$
\end{enumerate}
\end{definition}

We prove a simple result that will be used repeatedly. 

\begin{lemma}\label{simple lemma} Let $\kappa$ be a regular cardinal and $\lambda\geq \kappa.$ For every $\lambda^* >\lambda,$ every $(\kappa, \lambda)$-tree with no cofinal branches, can be extended to a 
$(\kappa, \lambda^*)$-tree with no cofinal branches. \end{lemma}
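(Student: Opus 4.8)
The idea is to take a $(\kappa,\lambda)$-tree $F$ with no cofinal branch and "pad out" its nodes with zeros on the coordinates in $[\lambda^*]^{<\kappa}\setminus[\lambda]^{<\kappa}$. Concretely, for $X\in[\lambda^*]^{<\kappa}$ and $f\in\lev_{X\cap\lambda}(F)$, let $\hat f\colon X\to 2$ be the function extending $f$ by $\hat f(\xi)=0$ for $\xi\in X\setminus\lambda$, and set
\[
F^*=\{\hat f\restr Y;\ Y\subseteq X\in[\lambda^*]^{<\kappa},\ f\in\lev_{X\cap\lambda}(F)\}.
\]
Equivalently, $F^*$ is the set of all $g\colon Y\to 2$ with $Y\in[\lambda^*]^{<\kappa}$ such that $g\restr(Y\cap\lambda)\in F$ and $g$ is identically $0$ on $Y\setminus\lambda$.

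First I would check the four clauses of Definition~\ref{main definition} for $F^*$. Clause (1) is immediate from the definition. Clause (2), closure under restriction, follows because if $g\in F^*$ has domain $Y$ and $Z\subseteq Y$, then $(g\restr Z)\restr(Z\cap\lambda)=(g\restr(Y\cap\lambda))\restr(Z\cap\lambda)\in F$ by clause (2) for $F$, and $g\restr Z$ is still $0$ off $\lambda$. For clause (3), given $X\in[\lambda^*]^{<\kappa}$, note $X\cap\lambda\in[\lambda]^{<\kappa}$, so $\lev_{X\cap\lambda}(F)\neq\emptyset$; extending any such node by $0$'s gives an element of $\lev_X(F^*)$. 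For clause (4), the map $g\mapsto g\restr(X\cap\lambda)$ is a bijection from $\lev_X(F^*)$ onto $\lev_{X\cap\lambda}(F)$, whence $|\lev_X(F^*)|=|\lev_{X\cap\lambda}(F)|<\kappa$.

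Finally, suppose toward a contradiction that $b\colon\lambda^*\to 2$ were a cofinal branch of $F^*$. Then $b\restr\lambda\colon\lambda\to 2$, and for every $X\in[\lambda]^{<\kappa}$ we have $X\in[\lambda^*]^{<\kappa}$, so $b\restr X\in\lev_X(F^*)$; since $X\subseteq\lambda$, the function $b\restr X$ is $0$ off $\lambda$ vacuously and $(b\restr X)\restr(X\cap\lambda)=b\restr X\in F$. Thus $b\restr X\in\lev_X(F)$ for all $X\in[\lambda]^{<\kappa}$, i.e. $b\restr\lambda$ is a cofinal branch of $F$, contradicting the choice of $F$. Hence $F^*$ is a $(\kappa,\lambda^*)$-tree with no cofinal branch.

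There is no real obstacle here; the only point requiring a moment's care is the verification of clause (3) together with making sure the "padding by zeros" is coherent under restriction, which is why it is convenient to describe $F^*$ intrinsically (all $g$ that are $0$ off $\lambda$ with $g\restr\lambda\in F$) rather than only via the extension operation $f\mapsto\hat f$.
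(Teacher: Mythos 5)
Your proposal is correct and is essentially identical to the paper's own proof: the same padding-by-zeros construction of $F^*$, the same bijection $g\mapsto g\restr(X\cap\lambda)$ to bound the levels, and the same observation that a cofinal branch of $F^*$ restricts to a cofinal branch of $F$. No issues.
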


\begin{proof} Let $F$ be a $(\kappa, \lambda)$-tree with no cofinal branches and let $\lambda^*>\lambda.$ We define a $(\kappa, \lambda^*)$-tree $F^*$ as follows: for every $X\in [\lambda^*]^{<\kappa},$ we let 
\begin{center}
$f: X\to 2 \in F^* \Longleftrightarrow_{def}  \ f\restr (X\cap \lambda) \in F$ and for every 
$\alpha\in X\setminus \lambda,$ $f(\alpha)= 0.$ 
\end{center}
It is clear that $F^*$ extends $F,$ i.e. for every $X\in [\lambda]^{<\kappa},$ $\lev_X(F)= \lev_X(F^*).$ 
 We check that $F^*$ is a $(\kappa, \lambda)$-tree. Conditions $1$ and $3$ of Definition \ref{main definition} are trivially satisfied. Condition $2$ is easily proved: if $f: X\to 2$ is in $F^*$ and $Y\subseteq X,$ then by definition $f\restr (X\cap \lambda)\in F,$ hence $f\restr (Y\cap \lambda)\in F.$ Moreover, for every
 $\alpha\in Y\setminus \lambda,$ we have $f\restr Y(\alpha)= f(\alpha)=0.$ Therefore $f\restr Y\in F^*.$ It remains to prove that for every $X\in [\lambda^*]^{<\kappa},$ the level $\lev_X(F^*)$ has size less than $\kappa,$ but the function 
 $f\mapsto f\restr \lambda$ defines a bijection of $\lev_X(F^*)$ into $\lev_{(X\cap \lambda)}(F),$ so $\lev_X(F^*)$ has size less than $\kappa.$ If $F^*$ has a cofinal branch $b^*: \lambda^*\to 2,$ then $b^*\restr \lambda$ is a cofinal branch for $F$ as well,  because for every $X\in [\lambda]^{<\kappa},$ $b^*\restr X\in \lev_X(F^*)= \lev_X(F).$ Since $F$ has no cofinal branches, $F^*$ has no cofinal branches as required. \end{proof}

\section{The Strong Tree Property at Successors of Singular Cardinals}\label{sec: successors}

To prove the consistency of the usual tree property at $\aleph_{\omega+1},$ Magidor and Shelah first proved a more general result concerning the tree property at successors of singular cardinals. 

\begin{theorem} (Magidor and Shelah \cite{MagidorShelah}) Assume $\nu$ is a singular limit of strongly compact cardinals, then 
$\nu^+$ has the tree property. 
\end{theorem}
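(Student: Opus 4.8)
The plan is to argue by contradiction: I would fix an arbitrary $\nu^+$-tree $T$ (height $\nu^+$, every level of size $\le\nu$), assume it has \emph{no} cofinal branch, and derive one. First I set up the singular structure. Since $\nu$ is a singular limit of strongly compact cardinals, fix $\mu=\operatorname{cf}(\nu)$ and an increasing sequence $\langle\kappa_i:i<\mu\rangle$ of strongly compact cardinals, cofinal in $\nu$, arranged so that $\mu<\kappa_0$ (possible because $\mu<\nu=\sup_i\kappa_i$). By passing to an isomorphic copy I normalize $T$ so that its underlying set is $\nu^+$, each level $T_\alpha$ is a set of ordinals of size $\le\nu$, and every node $t\in T_\alpha$ has a well-defined predecessor $t\restr\beta\in T_\beta$ for each $\beta<\alpha$, with $t\restr\beta$ cohering downward.

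The engine is strong compactness applied at each $\kappa_i$. For each $i<\mu$, strong compactness supplies a fine $\kappa_i$-complete ultrafilter $U_i$ on $[\nu^+]^{<\kappa_i}$; let $j_i\colon V\to M_i=\mathrm{Ult}(V,U_i)$ and let $A_i=[\,x\mapsto x\,]_{U_i}\in M_i$ be the seed, so that $j_i[\nu^+]\subseteq A_i$ and $|A_i|^{M_i}<j_i(\kappa_i)$. In $M_i$ the tree $j_i(T)$ has height $j_i(\nu^+)$, and since $|A_i|^{M_i}<j_i(\kappa_i)\le j_i(\nu)$ while $j_i(\nu^+)$ is regular in $M_i$, the ordinal $\gamma_i=\sup A_i$ lies strictly below $j_i(\nu^+)$. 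I pick any node $b_i$ of $j_i(T)$ at level $\gamma_i$, represented by $x\mapsto b_i^x$ with $b_i^x\in T_{\sup x}$ for $U_i$-most $x$. For each $\alpha<\nu^+$, fineness gives $j_i(\alpha)\in A_i$, so $b_i$ has a predecessor at level $j_i(\alpha)$, namely $[\,x\mapsto b_i^x\restr\alpha\,]_{U_i}\in (j_i(T))_{j_i(\alpha)}=j_i(T_\alpha)$. Whenever $|T_\alpha|<\kappa_i$, $\kappa_i$-completeness forces $x\mapsto b_i^x\restr\alpha$ to be constant on a $U_i$-large set, so it equals $j_i(c_i(\alpha))$ for a genuine node $c_i(\alpha)\in T_\alpha$; coherence of the $b_i^x$ makes $\alpha\mapsto c_i(\alpha)$ a \emph{coherent partial branch} of $T$ on $S_i:=\{\alpha:|T_\alpha|<\kappa_i\}$.

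The obstacle is precisely that a single $\kappa_i$ is insufficient: the levels of $T$ grow to size $\nu\ge\kappa_i$, so $S_i$ can be bounded in $\nu^+$; indeed, were $S_i$ cofinal, the coherent partial branch would extend to a genuine cofinal branch of $T$, contradicting our assumption, and this is exactly why we cannot stop at one embedding. The singular hypothesis must now be used to \emph{combine} the data across all $i<\mu$. The natural strategy is to choose the nodes $b_i$ coherently in $i$ — pulling back a single coherent target through each $j_i$ — so that the partial branches agree wherever two are defined, i.e.\ $c_i(\alpha)=c_{i'}(\alpha)$ whenever $|T_\alpha|<\kappa_{\min(i,i')}$. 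Granting such agreement, every $\alpha$ satisfies $|T_\alpha|\le\nu=\sup_i\kappa_i$, hence $|T_\alpha|<\kappa_i$ for all sufficiently large $i<\mu$, so $c(\alpha):=c_i(\alpha)$ (any large enough $i$) is well defined and $\alpha\mapsto c(\alpha)$ is a cofinal branch — the desired contradiction. Securing this agreement across the $\mu$ embeddings is the heart of the matter and, I expect, the main difficulty; it is where $\mu<\kappa_0$ is needed, so that a diagonalization over $i<\mu$ stays below the completeness thresholds $\kappa_i$, together with an application of the Pressing Down Lemma on $[\nu^+]^{<\kappa_0}$ to pin the overlapping values to a common node.

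A more robust route, and the one best suited to weakening the hypotheses, replaces the ultrapowers by the partition property enjoyed by strongly compact cardinals: one reformulates ``no cofinal branch'' as the nontriviality of a coloring of increasing tuples of levels, and applies a strong-compactness partition relation at the $\kappa_i$ to extract a homogeneous set yielding coherent nodes cofinally often. I would keep this combinatorial formulation in reserve, both because the branch-extraction step is cleaner to state than through the family of embeddings and because the surrounding paper aims to replace ``limit of strongly compacts'' by the bare partition property.
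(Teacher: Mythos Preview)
Your primary approach has a genuine gap at exactly the point you flag as ``the heart of the matter'': making the partial branches $c_i$ cohere across the $\mu$ embeddings. The mechanism you propose---``pulling back a single coherent target through each $j_i$''---does not make sense as stated, because the embeddings $j_i$ land in \emph{different} target models $M_i$, so there is no single node that can serve as a common target for all of them; and the Pressing Down Lemma on $[\nu^+]^{<\kappa_0}$ does not help either, since the values $c_i(\alpha)$ are selected by separate ultrafilter arguments in separate models and there is no natural regressive function tying them together. An embedding-style proof can be made to work, but it needs a different organization than the one you sketch; the recipe as written stalls precisely where the content is.

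The paper takes the combinatorial route you mention only in your final paragraph, and in fact proves more. It isolates a partition principle $\varphi(\kappa,\nu)$ (every coloring $c\colon[[S]]^2\to\gamma$ with $\gamma<\kappa$ on a stationary $S\subseteq[\lambda]^{<\nu}$ admits a stationary quasi-homogeneous set), shows that every strongly compact $\kappa$ satisfies $\varphi(\kappa,\nu)$ for all regular $\nu\ge\kappa$, and then proves that if $\nu$ is a singular limit of cardinals $\kappa_n$ each satisfying $\varphi(\kappa_n,\nu^+)$ then $\nu^+$ has the \emph{strong} tree property---the ordinary tree property follows a fortiori. Two applications of $\varphi$ do all the work: one at $\kappa_0$ to bound the relevant node-indices below some $\kappa_n$, and a second at $\kappa_{n+1}$ to pin down a single index $\zeta$ on a cofinal set. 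These quasi-homogeneous sets are exactly what replaces your missing coherence step; the singular structure enters only through having $\varphi$ available at two consecutive $\kappa_i$'s, not through any diagonalization over $\mu$. So your closing instinct is on target, but the bulk of the proposal is not.
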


In this section we prove that under the same assumptions, even the \emph{strong} tree property is satisfied at 
$\nu^+.$ The structure of the proof is very closed to Magidor and Shelah's proof of the previous theorem, although we will prove that to get the strong tree property at $\nu^+,$ it is enough for $\nu$ to be a singular limit of cardinals satisfying a nice partition property. This result is very important in the following, since the proof of the consistency of the strong tree property at $\aleph_{\omega+1},$ will mimic the proof of this theorem.    

\begin{notation}
Let $\mu$ be a regular cardinal and let $\lambda\geq \mu$ be any ordinal. 
For every cofinal set $I\subseteq [\lambda]^{<\mu}$ we denote by $[[\ I\ ]]^2$ the set of all pairs 
$(X,Y)\in I\times I$ such that $X\subseteq Y.$
\end{notation}

\begin{definition} Let $\mu>\kappa$ be two regular cardinals and let $S\subseteq [\lambda]^{<\mu}$ be a cofinal set and $c: [[\ S\ ]]^2\to \gamma$ a function such that $\gamma<\kappa.$ We say that a cofinal set $H\subseteq S$ is a \emph{quasi homogenous} set of color $i<\gamma$ iff for every $X,Y\in H$ there is $W\supseteq X,Y$ in $H$ such that $c(X,W)=i= c(Y,W).$\end{definition}

\begin{definition} Given two regular cardinals $\nu\geq \kappa,$ 
we say that the principle $\varphi(\kappa, \nu)$ holds when for every $\lambda\geq \nu$ if 
$S\subseteq [\lambda]^{<\nu}$ is a stationary set, then every function 
$c: [[\ S\ ]]^2\to \gamma$ with $\gamma<\kappa$ has a quasi homogenous set $H$ which is also stationary.  
\end{definition}

We now prove that strongly compact cardinals satisfy $\varphi$ everywhere. 

\begin{theorem}\label{coloring} Let $\kappa$ be a strongly compact cardinal, then 
$\varphi(\kappa, \nu)$ holds for every regular $\nu\geq \kappa.$
\end{theorem}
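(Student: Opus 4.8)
The plan is to use a fine, $\kappa$-complete ultrafilter on $[\lambda]^{<\kappa}$, whose existence follows from strong compactness of $\kappa$, to select the quasi-homogeneous set. Recall that since $\kappa$ is strongly compact, for every $\lambda$ there is a $\kappa$-complete fine ultrafilter $U$ on $P_\kappa(\lambda) = [\lambda]^{<\kappa}$; equivalently there is an elementary embedding $j\colon V\to M$ with critical point $\kappa$ such that $j[\lambda]$ is covered by some $A\in M$ with $|A|^M < j(\kappa)$. Fix such a $j$ with generator $A$, so $j[\lambda]\subseteq A\subseteq j(\lambda)$ and $M\models |A| < j(\kappa)$. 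The key point is that $A$ behaves like a ``canonical node above $j[\lambda]$'' in $M$.

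First I would fix $\lambda\geq\nu$, a stationary $S\subseteq[\lambda]^{<\nu}$, an ordinal $\gamma<\kappa$, and a coloring $c\colon[[\,S\,]]^2\to\gamma$. Apply $j$ to obtain $j(c)\colon [[\,j(S)\,]]^2\to j(\gamma)=\gamma$ (the last equality because $\gamma<\kappa=\mathrm{crit}(j)$). Now observe that $j[\lambda]$, being a subset of $j(\lambda)$ of size $\lambda < j(\nu)$ in $M$ (since $M$ is sufficiently closed, or at least since $j[\lambda]\subseteq A$ and $|A|<j(\kappa)\leq j(\nu)$), and since $S$ is stationary hence cofinal in $[\lambda]^{<\nu}$, elementarity gives that $j(S)$ is stationary in $[j(\lambda)]^{<j(\nu)}$; moreover for a suitable set $Z\in M$ with $j[\lambda]\subseteq Z\in j(S)$ — obtained because the set of $X\in j(S)$ above any fixed small set is $j(U')$-large for the appropriate club-containing ultrafilter, or simply because $j(S)$ is cofinal — we may fix one such $Z$. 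Then set $i := j(c)(j[\lambda]\cap(\text{something}), Z)$; more carefully, I want to define the color $i<\gamma$ as the ``typical'' value: for $X\in S$, let $i_X := j(c)(j(X), Z)$ where $j(X)\in j(S)$ and $j(X)\subseteq j[\lambda]\subseteq Z$, so $(j(X),Z)\in[[\,j(S)\,]]^2$. Since $j(X)\subseteq j[\lambda]$ has size $<\kappa\leq\nu$ wait — $|j(X)| = |X| < \nu$, and $j(X) = j[X]$ since $X\in[\lambda]^{<\kappa}$ has size $<\kappa$ so $j$ restricted to it is just the pointwise image — good, so $i_X\in\gamma$ is well-defined for each $X\in S$.

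Now the heart of the argument: I would define $H := \{X\in S : j(X)\subseteq Z \text{ and } j(c)(j(X),Z) = i\}$ after first using the pigeonhole principle in $V$ (via $\kappa$-completeness) to fix a single $i<\gamma$ such that $H$ is stationary. Concretely, for each color $t<\gamma$ let $S_t := \{X\in S : i_X = t\}$; these partition $S$, and since $\gamma<\kappa$ and stationarity of $S$ in $[\lambda]^{<\nu}$ is preserved under partitions into $<\kappa\leq\nu$-many pieces (indeed $<\mathrm{cf}(\nu)$ suffices but here $\gamma<\kappa\leq\nu$ and we can use that the club filter on $[\lambda]^{<\nu}$ is $\nu$-complete... actually $\gamma<\kappa\le\nu$ and $\nu$ regular so fine), there is some $t=i$ with $S_i$ stationary; set $H:=S_i$. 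It remains to check $H$ is quasi-homogeneous of color $i$: given $X,Y\in H$, I need $W\in H$ with $W\supseteq X\cup Y$ and $c(X,W)=i=c(Y,W)$. The statement ``there exists such a $W$ in $S$ with those two color values $i$'' is, after applying $j$, witnessed by $W:=Z$ in $M$: indeed $j(X),j(Y)\subseteq j[\lambda]\subseteq Z$, $Z\in j(S)$, $j(c)(j(X),Z)=i=j(c)(j(Y),Z)$ by choice of $i$ and membership of $X,Y$ in $H$. So $M\models \exists W\in j(S)\,[\,W\supseteq j(X)\cup j(Y)\wedge j(c)(j(X),W)=i\wedge j(c)(j(Y),W)=i\,]$, and by elementarity $V\models \exists W\in S\,[\,W\supseteq X\cup Y\wedge c(X,W)=i\wedge c(Y,W)=i\,]$. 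That is exactly quasi-homogeneity.

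The main obstacle I anticipate is the bookkeeping around stationarity: I must make sure (a) that $j(S)$ really is stationary (or at least cofinal enough that a $Z\supseteq j[\lambda]$ with $Z\in j(S)$ exists) — this needs either that $j$ moves stationary sets to stationary sets, which for $P_\kappa\lambda$ and a fine ultrafilter embedding is standard since the club filter reflects, or one argues directly that $\{X\in j(S):X\supseteq j[\lambda]\}$ is nonempty using that $j[\lambda]$ is covered by a set of $M$-size $<j(\kappa)\le j(\nu)$ together with cofinality of $j(S)$ below that size; and (b) that partitioning $S$ into $\gamma<\kappa$ pieces keeps one piece stationary, which requires $\mathrm{cf}(\nu)\ge\kappa$ or more simply that we only need $\gamma<\mathrm{cf}(\nu)$ — here $\gamma<\kappa\le\nu$ and $\nu$ is regular so $\mathrm{cf}(\nu)=\nu>\gamma$, fine. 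A secondary subtlety is pulling quasi-homogeneity (an $\exists W$ statement) rather than full homogeneity back through $j$: the definition of quasi-homogeneous is tailored precisely so that the witness $W$ can be the single covering set $Z$ upstairs, which is why this weaker notion, unlike genuine homogeneity, transfers cleanly — I would emphasize this point, since it is the reason the theorem is stated with "quasi homogeneous" rather than "homogeneous."
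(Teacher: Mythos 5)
There is a genuine gap, and it comes from the characterization of strong compactness you chose. Your whole construction hinges on having a single node $Z\in j(S)$ lying above the image of every $X\in S$, and you justify $(j(X),Z)\in[[\,j(S)\,]]^2$ by writing $j(X)=j[X]\subseteq j[\lambda]\subseteq Z$ ``since $X\in[\lambda]^{<\kappa}$ has size $<\kappa$.'' But the sets in $S$ live in $[\lambda]^{<\nu}$ with $\nu\geq\kappa$, so $X$ may well have size $\geq\kappa=\mathrm{crit}(j)$; then $j(X)\neq j[X]$, $j(X)$ contains ordinals outside $j[\lambda]$, and nothing about a covering set $A\supseteq j[\lambda]$ with $|A|^M<j(\kappa)$ forces $j(X)\subseteq Z$ (while $j[X]$ need not even belong to $M$, let alone to $j(S)$). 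So $i_X:=j(c)(j(X),Z)$ is not well defined for the typical $X\in S$, and the final reflection step, which needs $j(X),j(Y)\subseteq Z$, collapses for exactly the same reason. The covering-embedding form of strong compactness is simply too weak to give you a node above $j(X)$ for \emph{all} $X\in S$; what you need is an embedding (equivalently an ultrafilter) that is fine with respect to $[\lambda]^{<\nu}$, i.e.\ with $\{Y\in S: X\subseteq Y\}$ measured large for every $X\in S$. That is obtained by extending the $\nu$-complete club filter on $[\lambda]^{<\nu}$, restricted to $S$, to a $\kappa$-complete ultrafilter $U$ — the filter-extension definition of strong compactness — and this is precisely the paper's proof: every set in $U$ is stationary, $\kappa$-completeness gives for each $X$ a $U$-large $H_X$ on which $c(X,\cdot)$ is constantly $i_X$, and a second application of completeness stabilizes $X\mapsto i_X$ on a $U$-large $H$.

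A second, independent defect: the paper's notion of quasi-homogeneity requires the witness $W$ to lie in $H$ itself, whereas your elementarity pull-back only produces some $W\in S$ with $c(X,W)=i=c(Y,W)$; your $H$ is defined via $j$ and $Z$, so membership in $H$ is not a statement you can reflect through $j$. The ultrafilter argument handles this automatically, since for $X,Y\in H$ the set $H_X\cap H_Y\cap H$ is in $U$ and any $W$ in it is a witness inside $H$. (Your reduction of stationarity of $H$ to $\nu$-completeness of the club filter on $[\lambda]^{<\nu}$ is fine; it is the two points above that need repair, and repairing them leads you back to the paper's ultrafilter proof.)
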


\begin{proof}  Fix $\lambda\geq \nu$ and a function 
$c: [[\ S\ ]]^2\to \gamma$ where $\gamma<\kappa,$ and let $S\subseteq [\lambda]^{<\nu}$ be a stationary set. 
Consider all the sets of the form $C\cap S$ where $C\subseteq [\lambda]^{<\nu}$ is a club; they form a 
$\kappa$-complete family. Since $\kappa$ is strongly compact, there exists a $\kappa$-complete ultrafilter 
$U$ that contains all these sets. Note that every set in $U$ is stationary. In fact if $H\in U$ and $C$ is a club, then by definition $C\cap S\in U,$ hence $H\cap C\cap S$ is in U as well, and it is non empty. First we show that for every $X\in S,$ there is $i_X<\gamma$ and a set 
$H_X\subseteq S$ in $U$ such that for every $Y$ in $H_X$ we have $c(X,Y)= i_X.$ 
Assume for a contradiction that for every $i<\gamma,$ the set $K_i:= \{Y\in S;\ Y\supseteq X \textrm{ and }\ c(X,Y)\neq i \}\in U$ then, by the $\kappa$-completeness of $U,$ the intersection $\bigcap_{i<\gamma}C_i$ is in $U$ and it is empty, a contradiction. 
A similar argument proves that the function $X\mapsto i_X$ is constant on a set $H\in U;$ let $i$ be such that $i= i_X,$ for every $X\in H.$ Now, it is easy to see that $H$ is quasi-homogenous of color $i.$ Indeed, if $X,Y\in H,$ then $H_X\cap H_Y\cap H$ belongs to $U$ and it is, therefore, non empty. Let $Z\in H_X\cap H_Y\cap H,$ then we have $c(X,Z)= i= c(Y,Z)$ as required. \end{proof} 

\begin{theorem}\label{phi} Let $\nu$ be a singular cardinal such that $\nu= \lim_{i<cof(\nu)} \kappa_i$ where every $\kappa_i$ is an uncountable cardinal satisfying $\varphi(\kappa_i, \nu^+).$ Then $\nu^+$ has the strong tree property. 
\end{theorem}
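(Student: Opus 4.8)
The strategy is to mimic the Magidor--Shelah argument for the ordinary tree property, replacing ordinary trees by $(\nu^+,\lambda)$-trees and the combinatorial reflection with the partition principle $\varphi$. Fix $\lambda\geq\nu^+$ and a $(\nu^+,\lambda)$-tree $F$; we must produce a cofinal branch. Since $\nu^+$ is regular and each level $\lev_X(F)$ has size $<\nu^+$, we may use the singularity of $\nu$: write $\nu=\lim_{i<\cof(\nu)}\kappa_i$ with the $\kappa_i$ uncountable and each satisfying $\varphi(\kappa_i,\nu^+)$. The first move is to restrict attention to nodes whose domain lies in a fixed cofinal (indeed club) subset $S\subseteq[\lambda]^{<\nu^+}$ of sets $X$ of a convenient form (e.g.\ $|X|<\nu$, or $X$ closed under relevant Skolem functions), so that for $X\in S$ we can speak of ``$X$ below $\kappa_i$'' in the sense that $|X|<\kappa_i$ for some large enough $i$. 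By regularity of $\nu^+$ and $\cof(\nu)<\nu$, for each $X\in[\lambda]^{<\nu^+}$ with $|X|<\nu$ there is a least such index; pressing down (the Pressing Down Lemma, on a suitable stationary set) we may assume this index is constant, say equal to a fixed $i^\ast$, on a stationary $S_0\subseteq S$. This reduces us to a situation where all relevant domains have size $<\kappa:=\kappa_{i^\ast}$.

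Next I would define, for each pair $(X,Y)\in[[\,S_0\,]]^2$, a coloring that records how nodes at level $Y$ sit above nodes at level $X$. Concretely, since $|\lev_X(F)|<\kappa$ for $X\in S_0$, I would fix an injective enumeration of $\lev_X(F)$ and, given $X\subseteq Y$ in $S_0$, color $(X,Y)$ by the function (coded as an ordinal $<\kappa$, using $\kappa$'s inaccessibility-type closure or just $|\lev_X(F)|^{|\lev_Y(F)|}<\kappa$ when $\kappa$ is a limit of appropriate cardinals --- here one uses that $\kappa$ is strongly compact or at least sufficiently closed, which is implicit in the hypotheses) sending each $g\in\lev_Y(F)$ to the index of $g\restr X\in\lev_X(F)$. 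If the number of such colors exceeds $\kappa_{i^\ast}$, one climbs to a larger $\kappa_j$; this is exactly why we need \emph{infinitely many} $\kappa_i$'s below $\nu$, and why a single partition cardinal would not suffice. Applying $\varphi(\kappa_j,\nu^+)$ to this coloring yields a stationary quasi-homogeneous set $H\subseteq S_0$ of some color $c^\ast$. Quasi-homogeneity says: for all $X,Y\in H$ there is $W\supseteq X,Y$ in $H$ with the ``amalgamation pattern'' above $X$ and above $Y$ both equal to $c^\ast$.

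The heart of the proof is then to extract a genuine cofinal branch $b:\lambda\to2$ from the stationary quasi-homogeneous set $H$. The idea is that $c^\ast$ encodes a coherent way of selecting, for each $X\in H$, a single node $b_X\in\lev_X(F)$ such that whenever $X\subseteq Y$ are both in $H$, $b_Y\restr X=b_X$; quasi-homogeneity via the common extension $W$ forces this coherence even for non-nested $X,Y\in H$, because $b_W\restr X$ and $b_W\restr Y$ are pinned down by $c^\ast$. Having a coherent family $\{b_X:X\in H\}$ with $H$ cofinal, one defines $b=\bigcup_{X\in H}b_X$ and checks it is a total function $\lambda\to2$; then for an arbitrary $Z\in[\lambda]^{<\nu^+}$ one finds $X\in H$ with $Z\subseteq X$ (cofinality of $H$) and uses property (2) of the $(\nu^+,\lambda)$-tree to conclude $b\restr Z=b_X\restr Z\in\lev_Z(F)$, so $b$ is cofinal. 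The main obstacle, and the step requiring the most care, is precisely this extraction: making sure the color $c^\ast$ really does determine a \emph{single} coherent branch rather than merely constraining the amalgamations, and in particular handling non-nested pairs in $H$ --- this is where the precise formulation of the coloring (what exactly gets coded into $\gamma<\kappa$) must be chosen so that quasi-homogeneity of color $c^\ast$ is strong enough to yield coherence. A secondary technical point is verifying at the outset that the number of colors can always be kept below \emph{some} $\kappa_i$, using the singularity of $\nu$ together with the closure properties the $\kappa_i$ inherit (in the strongly compact case) or must be assumed to have in the general partition-property version.
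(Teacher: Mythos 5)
Your plan has two genuine gaps, and the second one is exactly the missing heart of the argument. First, the opening reduction is not available: the collection of $X\in[\lambda]^{<\nu^+}$ with $|X|<\nu$ is not cofinal in $[\lambda]^{<\nu^+}$ (any $X\supseteq\nu$ has size $\nu$, and such $X$ form a club once $\lambda\geq\nu^+$), hence it is neither club nor stationary; so the map sending $X$ to the least $i$ with $|X|<\kappa_i$ is not defined on a stationary set, cannot be stabilized, and a family of nodes cohering only on levels with domains of size $<\kappa_{i^\ast}$ could not produce a cofinal branch anyway. The relevant objects are not the domains but the \emph{levels}, which may have size $\nu$ (the definition only bounds them by $\nu^+$), so your assertion $|\lev_X(F)|<\kappa$ is unjustified. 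Second, your coloring records the whole projection map $g\mapsto$ index of $g\restr X$, which has up to $|\lev_X(F)|^{|\lev_Y(F)|}$, i.e.\ possibly $2^{\nu}$, many values; no $\kappa_j<\nu$ bounds this, so ``climbing to a larger $\kappa_j$'' cannot make $\varphi(\kappa_j,\nu^+)$ applicable, and the hypotheses provide no strong compactness or closure to shrink it (only $\varphi(\kappa_i,\nu^+)$ for \emph{some} uncountable $\kappa_i$ cofinal in $\nu$). You then admit that extracting a single coherent node per level from quasi-homogeneity is the delicate point; that extraction, and the choice of colorings small enough for $\varphi$, is precisely what your sketch does not supply.

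What fills the gap in the paper is a two-stage coloring that keeps the number of colors below the $\kappa_i$'s. Fix enumerations $\lev_X(F)=\{f^X_i\}_{i<|\lev_X(F)|}$ (of length at most $\nu$). Stage one: color $c(X,Y)=\min\{m:\ f_0^Y\restr X \text{ has index }<\kappa_m \text{ in } \lev_X(F)\}$, an $\omega$-coloring; $\varphi(\kappa_0,\nu^+)$ yields a stationary quasi-homogeneous $S$ of some color $n$, and passing through a common $W\in S$ above any $X,Y\in S$ gives $\zeta,\eta<\kappa_n$ with $f^X_\zeta\restr(X\cap Y)=f^Y_\eta\restr(X\cap Y)$. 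Stage two: color each pair $X\subseteq Y$ in $S$ by the lexicographically least $(\zeta,\eta)\in\kappa_n\times\kappa_n$ with $f^Y_\eta\restr X=f^X_\zeta$; this has only $\kappa_n<\kappa_{n+1}$ colors, so $\varphi(\kappa_{n+1},\nu^+)$ gives a stationary quasi-homogeneous $S'$ of a fixed color $(\zeta,\eta)$, and again via a common $W$ one gets $f^X_\zeta\restr(X\cap Y)=f^Y_\zeta\restr(X\cap Y)$ for all $X,Y\in S'$. Then $b=\bigcup_{X\in S'}f^X_\zeta$ is a well-defined function with $b\restr Y=f^Y_\zeta$ for $Y\in S'$, and cofinality of $S'$ together with closure of $F$ under restrictions makes $b$ a cofinal branch. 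Note the role of the first stage: it pins the witnessing indices below a single $\kappa_n$, which is what makes the second coloring small enough for $\varphi(\kappa_{n+1},\nu^+)$ --- this replaces, and is much weaker than, coloring by the full projection pattern.
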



\begin{proof} To simplify the notation we will assume that $\nu$ has countable cofinality, so $\nu= \lim_{n<\omega} \kappa_n.$ 
Suppose without loss of generality that $\langle \kappa_n\rangle_{n<\omega}$ is increasing. 
Let $\mu\geq \nu^+$ and let $F$ be a 
$(\nu^+, \mu)$-tree. For every $X\in [\mu]^{<\nu^+},$ let $\{f_i^X\}_{i<\vert \lev_X(F)\vert}$ be an enumeration of 
$\lev_X(F).$ First we ``shrink" the tree as follows.  

\begin{lemma} 
There exists $n<\omega$ and a stationary set
 $S\subseteq [\mu]^{<\nu^+},$ such that for all $X,Y\in S,$ there are $\zeta, \eta<\kappa_n$ such that $f_{\zeta}^X \restr (X\cap Y) = f_{\eta}^Y\restr (X\cap Y).$\end{lemma}

\begin{proof} Given a function $f\in Lev_X,$ we write $\#f = i$ for $i<\nu,$ when $f= f_i^X.$ 
Define $c: [[\ [\mu]^{<\nu^+} ]]^2\to \omega $ by $c(X,Y)=\min \{i;\ \#(f_0^Y\restr X)<\kappa_i \}.$ 
By hypothesis $\varphi(\kappa_0, \nu^+)$ holds, hence there is a stationary quasi homogenous set $S\subseteq [\mu]^{<\nu^+}$ of color $n<\omega.$ Then, for every $X,Y\in S,$ there is $Z\supseteq X,Y$ in $S$ such that $c(X,Z)= n= c(Y,Z).$ This means that 
$\#(f_0^Z\restr X),\ \#(f_0^Z\restr Y)<\kappa_n,$ namely there are $\zeta,\eta<\kappa_n$ such that $f_0^Z\restr X= f_\zeta^X$ and $f_0^Z\restr Y= f_\eta^Y.$ So we have 
$$f_\zeta^X \restr (X\cap Y) = f_0^Z\restr (X\cap Y)= f_\eta^Y\restr (X\cap Y),$$ as required. That completes the proof of the lemma. \end{proof}

Let $n$ and $S$ be as above, we prove the following fact. 

\begin{lemma} There is a cofinal $S'\subseteq S$ and an ordinal $\zeta<\kappa_n$ such that for all $X,Y\in S',$ we have
$f_{\zeta}^X \restr (X\cap Y)= f_{\zeta}^Y \restr (X\cap Y)$ (the set $S'$ is even stationary).
\end{lemma}

\begin{proof} For every $(X,Y)\in [[\ S\ ]]^2,$ we define $\bar{c}(X,Y)$ as the minimum couple $(\zeta,\eta)\in \kappa_n\times \kappa_n,$ in the lexicografical order, such that $f_\eta^Y\restr X= f_\zeta^X$ --- the function is well defined by definition of $n$ and $S.$ 
We can apply $\varphi(\kappa_{n+1}, \nu^+)$ to $\bar{c}$ as this can be seen as a function from $[[\ S\ ]]^2$ into $\kappa_n$ --- take any bijection $h: \kappa_n\times \kappa_n\to \kappa_n$ and apply $\varphi(\kappa_{n+1}, \nu^+)$ to $\bar{c} \circ h: [[\ S\ ]]^2 \to \kappa_n.$
So there exists a quasi homogenous stationary set $S'$ of color 
$(\zeta, \eta)\in \kappa_n\times \kappa_n,$ 
hence for every $X,Y\in S',$ there is $Z\supseteq X,Y$ in $S'$ such that 
$\bar{c}(X,Z)= (\zeta, \eta) =\bar{c}(Y,Z).$ This means that $f_\eta^Z\restr X= f_\zeta^X$ and $f_\eta^Z\restr Y= f_\zeta^Y.$ It follows that 
$$f_\zeta^X \restr (X\cap Y)= f_\eta^Z\restr (X\cap Y)= f_\zeta^Y\restr (X\cap Y).$$ That completes the proof of the lemma. \end{proof}

Now we conclude the proof of the theorem by defining a cofinal branch. Let $b:= \bigcup_{X\in S'} f_{\zeta}^X,$ by the previous lemma $b$ is a function. Moreover, for every $Y\in S'$ we have 
$$b\restr Y = \bigcup_{X\in S'} f_{\zeta}^X \restr Y= \bigcup_{X\in S'} f_{\zeta}^X \restr (X\cap Y)= \bigcup_{X\in S'} f_{\zeta}^Y\restr (X\cap Y)= f_{\zeta}^Y.$$
It follows that $b$ is a cofinal branch for $F.$ \end{proof}

\begin{coroll}\label{MS} Let $\nu$ be a singular limit of strongly compact cardinals, then $\nu^+$ has the strong tree property. 
\end{coroll}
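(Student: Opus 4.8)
The final statement to prove is Corollary~\ref{MS}: if $\nu$ is a singular limit of strongly compact cardinals, then $\nu^+$ has the strong tree property. The plan is to derive this immediately as a combination of the two theorems just proved, namely Theorem~\ref{coloring} and Theorem~\ref{phi}. Concretely, write $\nu = \lim_{i < \cof(\nu)} \kappa_i$ where each $\kappa_i$ is strongly compact (and in particular uncountable, indeed regular). Since $\nu^+$ is a regular cardinal with $\nu^+ \geq \kappa_i$ for each $i$, Theorem~\ref{coloring} applies with $\kappa := \kappa_i$ and the arbitrary regular cardinal taken to be $\nu^+$, yielding that $\varphi(\kappa_i, \nu^+)$ holds for every $i < \cof(\nu)$.

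Having checked the hypothesis of Theorem~\ref{phi} --- that $\nu$ is singular, $\nu = \lim_{i < \cof(\nu)} \kappa_i$, and every $\kappa_i$ is an uncountable cardinal satisfying $\varphi(\kappa_i, \nu^+)$ --- I would simply invoke that theorem to conclude that $\nu^+$ has the strong tree property. No further argument is needed: the content is entirely in the two preceding theorems, and this corollary is just their conjunction specialized to strongly compact $\kappa_i$.

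The ``main obstacle'' is therefore essentially bookkeeping rather than mathematics: one only needs to confirm that each $\kappa_i$ can be taken uncountable (automatic, since strongly compact cardinals are uncountable by definition) and that $\nu^+ \geq \kappa_i$ for all $i$ (automatic, since $\kappa_i < \nu < \nu^+$), so that Theorem~\ref{coloring} genuinely delivers $\varphi(\kappa_i, \nu^+)$. If one wished to be slightly more careful about the reduction in Theorem~\ref{phi} from general $\cof(\nu)$ to the case $\cof(\nu) = \omega$ treated there, one would note that the argument of Theorem~\ref{phi} goes through verbatim for any uncountable cardinal as the index type, using at stage $i$ the principle $\varphi(\kappa_i, \nu^+)$ and iterating the ``shrinking'' twice exactly as written; but the excerpt already states Theorem~\ref{phi} in the general form, so I would just cite it. Hence the proof is a one-line deduction.

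\begin{proof} By Theorem~\ref{coloring}, every strongly compact cardinal $\kappa$ satisfies $\varphi(\kappa, \eta)$ for every regular $\eta \geq \kappa$; in particular, writing $\nu = \lim_{i < \cof(\nu)} \kappa_i$ with each $\kappa_i$ strongly compact, we have that $\varphi(\kappa_i, \nu^+)$ holds for every $i < \cof(\nu)$, since $\nu^+$ is regular and $\nu^+ \geq \kappa_i$. Moreover each $\kappa_i$ is uncountable. Thus the hypotheses of Theorem~\ref{phi} are met, and we conclude that $\nu^+$ has the strong tree property. \end{proof}
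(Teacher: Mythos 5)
Your proof is correct and matches the paper's own proof, which is exactly the one-line deduction "Apply Theorem \ref{phi} and Theorem \ref{coloring}." Your extra bookkeeping remarks (uncountability of the $\kappa_i$, $\nu^+\geq\kappa_i$, and the cofinality reduction in Theorem \ref{phi}) are sensible but not needed beyond what the paper does.
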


\begin{proof} Apply Theorem \ref{phi} and Theorem \ref{coloring}. \end{proof}


Whether such result can be generalized to the \emph{super} tree property is still an open problem. 
Based on the analogy between supercompact cardinals and the super tree property, we can conjecture that the successor of a singular limit of \emph{supercompact} cardinals satisfy the super tree property. 

We conclude this section by proving the following fact. 

\begin{proposition} Given a regular cardinal $\kappa,$ if $\varphi(\kappa, \kappa)$ holds, then 
$\kappa$ has the strong tree property.
\end{proposition}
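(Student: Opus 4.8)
The plan is to imitate the proof of Theorem \ref{phi}, noting that when $\nu$ is replaced by a single regular cardinal $\kappa$ with $\varphi(\kappa,\kappa)$ we no longer need the two-stage argument that passed through an increasing sequence $\langle\kappa_n\rangle_n$ converging to $\nu$; a single application of $\varphi(\kappa,\kappa)$ at each step should suffice, but we must be careful that the colourings now take values in ordinals $<\kappa$ rather than $<\omega$. So fix $\lambda\geq\kappa$ and a $(\kappa,\lambda)$-tree $F$. As in Theorem \ref{phi}, for each $X\in[\lambda]^{<\kappa}$ fix an enumeration $\{f_i^X\}_{i<|\lev_X(F)|}$ of $\lev_X(F)$, with each $|\lev_X(F)|<\kappa$, and write $\#f=i$ when $f=f_i^X$ for $f\in\lev_X$.

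First I would define $\bar c\colon[[\,[\lambda]^{<\kappa}\,]]^2\to\kappa$ by $\bar c(X,Y)=\#(f_0^Y\restr X)$, which is a legitimate ordinal $<\kappa$ since $f_0^Y\restr X\in\lev_X(F)$ and $|\lev_X(F)|<\kappa$. Applying $\varphi(\kappa,\kappa)$ — here using that a stationary subset of $[\lambda]^{<\kappa}$ is what is required and $[\lambda]^{<\kappa}$ itself is stationary — we obtain a stationary quasi-homogeneous set $S\subseteq[\lambda]^{<\kappa}$ of some fixed colour $\zeta<\kappa$. Then for all $X,Y\in S$ there is $Z\supseteq X,Y$ in $S$ with $\bar c(X,Z)=\zeta=\bar c(Y,Z)$, i.e.\ $f_0^Z\restr X=f_\zeta^X$ and $f_0^Z\restr Y=f_\zeta^Y$, whence
$$f_\zeta^X\restr(X\cap Y)=f_0^Z\restr(X\cap Y)=f_\zeta^Y\restr(X\cap Y).$$
Here is the one subtlety relative to Theorem \ref{phi}: the colour $\zeta$ coming out of a single quasi-homogeneous set is the \emph{same} ordinal witnessing $f_0^Z\restr X=f_\zeta^X$ for every $X\in S$, so we already get a single fixed index $\zeta$ and need no second colouring. (If one preferred, one could instead first shrink via the colouring $c(X,Y)=\#(f_0^Y\restr X)$ exactly as in the two lemmas of Theorem \ref{phi}, collapsing the two stages into one because there is no singular cardinal to climb through.)

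Finally I would close exactly as in Theorem \ref{phi}: set $b:=\bigcup_{X\in S}f_\zeta^X$. The displayed coherence shows $b$ is a well-defined function, and for every $Y\in S$,
$$b\restr Y=\bigcup_{X\in S}f_\zeta^X\restr(X\cap Y)=\bigcup_{X\in S}f_\zeta^Y\restr(X\cap Y)=f_\zeta^Y\in\lev_Y(F),$$
using that $S$ is cofinal in $[\lambda]^{<\kappa}$ so that $\bigcup_{X\in S}(X\cap Y)=Y$. Since $S$ is cofinal, for an arbitrary $W\in[\lambda]^{<\kappa}$ pick $Y\in S$ with $W\subseteq Y$; then $b\restr W=f_\zeta^Y\restr W\in\lev_W(F)$ by clause (2) of Definition \ref{main definition}. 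Hence $b$ is a cofinal branch, so $(\kappa,\lambda)$-TP holds; as $\lambda\geq\kappa$ was arbitrary, $\kappa$ has the strong tree property.

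\textbf{Main obstacle.} The only real point to check carefully is that the range of the colouring genuinely lands in an ordinal $<\kappa$ so that $\varphi(\kappa,\kappa)$ applies — this is where regularity of $\kappa$ together with $|\lev_X(F)|<\kappa$ for all $X$ is used — and that a single quasi-homogeneous set already pins down one index $\zeta$ uniformly, so that the two-lemma structure of Theorem \ref{phi} genuinely collapses; everything else is a transcription of that proof with $\nu^+$ replaced by $\kappa$.
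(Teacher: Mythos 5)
There is a genuine gap at the key step: your colouring $\bar c(X,Y)=\#(f_0^Y\restr X)$ does not satisfy the hypothesis of $\varphi(\kappa,\kappa)$. The principle, as defined in the paper, applies only to colourings $c:[[\,S\,]]^2\to\gamma$ with a \emph{fixed} $\gamma<\kappa$. Your justification -- that each value is $<\kappa$ because $f_0^Y\restr X\in\lev_X(F)$ and $\vert\lev_X(F)\vert<\kappa$ -- only gives pointwise boundedness: the bound $\vert\lev_X(F)\vert$ varies with $X$, and since there are $\lambda^{<\kappa}\geq\kappa$ many levels, regularity of $\kappa$ does not make $\sup_X\vert\lev_X(F)\vert<\kappa$. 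So the range of $\bar c$ is in general all of $\kappa$, and $\varphi(\kappa,\kappa)$ simply does not apply (nor should one expect a quasi-homogeneous stationary set for $\kappa$ many colours in general; e.g.\ colouring a pair by the order type of its first coordinate already defeats it). This is exactly the point you flagged as the ``main obstacle,'' but the argument you give for it is the step that fails.

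The paper's proof deals with this before any colouring is defined: after possibly enlarging $\lambda$ to some $\lambda^*$ and extending the tree via Lemma \ref{simple lemma}, it arranges that $\vert\lev_X(F)\vert$ is constant, equal to a single $\gamma<\kappa$, on a stationary set $S$, and only then colours pairs from $S$ by the least pair $(i,j)\in\gamma\times\gamma$ with $f_j^Y\restr X=f_i^X$; this is a colouring into a set of size $<\kappa$, so $\varphi(\kappa,\kappa)$ is legitimately applicable. Your streamlined single-index colouring (using $f_0^Y\restr X$ rather than a minimal pair) would work fine \emph{after} such a reduction, since then $\#(f_0^Y\restr X)<\gamma$ for all $X\subseteq Y$ in $S$, and the rest of your argument -- coherence on $X\cap Y$ via a common $Z$, then $b:=\bigcup_{X\in H}f_\zeta^X$ and restriction to arbitrary $W$ using clause (2) of Definition \ref{main definition} -- goes through as in the paper. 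As written, however, the uniform bound on the colours is missing, and without it the appeal to $\varphi(\kappa,\kappa)$ is unjustified.
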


\begin{proof} Let $F$ be a $(\kappa, \lambda)$-tree where $\lambda\geq \kappa.$ 
For every $X\in [\lambda]^{<\kappa},$ let $\{f_i^X\}_{i<\gamma_X}$ be an enumeration of $\lev_X(F).$ 
We can assume without loss of generality that $\lambda^{<\kappa}$ is large enough so that 
$\gamma_X$ (the size of $\vert \lev_X(F)\vert$) is constant on a stationary set $S\subseteq [\lambda]^{<\kappa}$ --- indeed, if it is not the case we can take a larger $\lambda^*$ and use Lemma \ref{simple lemma}. So let $\gamma<\kappa$ be such 
that $\gamma_X= \gamma$ for every $X\in S.$ We define a function
$c: [[\ S\ ]]^2\to \gamma\times \gamma$ by letting 
$c(X,Y)$ be the the minimum couple $(i,j)$ in the lexicografical order such that $f_j^Y\restr X= f_i^X.$ The function $c$ can be seen as a function from 
$[[\ S\ ]]^2$ into $\gamma,$ so there exists a quasi-homogenous and stationary set 
$H\subseteq S.$ Assume $H$ has color $(i,j)\in \gamma\times \gamma,$ we let 
$b:= \bigcup_{X\in H} f_i^X$ and we prove that $b$ is a cofinal branch. 
Given $X,Y\in H,$ there is $Z\in H$ such that 
$X,Y\subseteq Z$ and $c(X,Z)= (i,j)= c(Y,Z).$ By definition of $c,$ we have 
\begin{enumerate}
\item $f_j^Z\restr X= f_i^X;$
\item $f_j^Z\restr Y= f_i^Y.$
\end{enumerate}

It follows that $f_i^X\restr (X\cap Y)= f_i^Y\restr (X\cap Y).$ Therefore $b$ is a function and for every $Y\in H,$ we have
$$b\restr Y= \bigcup_{X\in H} f_i^X \restr Y= \bigcup_{X\in H} f_i^X\restr (X\cap Y)= \bigcup_{X\in H} f_i^Y\restr (X\cap Y)= f_i^Y,$$ so $b$ is a cofinal branch. \end{proof}





\section{Systems}

To prove the consistency of the strong tree property at $\aleph_{\omega+1},$ we will work with a special structure that we call a ``system''. To understand this notion, suppose we are in the following situation. 
Let $\nu$ be a cardinal in a model $V$ and let $\mathbb{P}$ be a forcing notion with the $\nu^+$-covering property --- so that for every $\lambda\geq \nu^+,$ the set $([\lambda]^{<\nu^+})^V$ is cofinal in the $[\lambda]^{<\nu^+}$ of the generic extension. Assume $\dot{F}\in V^{\mathbb{P}}$ is a name for a 
$(\nu^+, \lambda)$-tree and for every $X\in [\lambda]^{<\nu^+},$ $\dot{e}_X$ is a $\mathbb{P}$-name for a
an enumeration of $\lev_X(\dot{F})$ (i.e. $\force_{\mathbb{P}}\ \dot{e}_X: \nu \to \lev_X(\dot{F})$ is onto).
For every $p\in \mathbb{P},$ we define a binary relation $S_p$ over the pairs $(X, \zeta),$ where 
$X\in [\lambda]^{<\nu^+}$ and $\zeta<\nu:$ 

\begin{center} $(X, \zeta)\ S_p\ (Y, \eta) \Longleftrightarrow_{def} \ p\force\ \dot{e}_X(\zeta)= \dot{e}_Y(\eta)\restr X.$ 
\end{center}

In other words, we have $(X, \zeta)\ S_p\ (Y, \eta)$ when $p$ forces that the $\eta$-th function on level $Y$ extends the $\zeta$-th function on level $X.$ The family $\{S_p\}_{p\in \mathbb{P}}$ satisfies the following definition. 

\begin{definition}\label{system} Given an ordinal $\lambda\geq \nu^+,$ a cofinal set $D\subseteq [\lambda]^{<\nu^+}$ and a family $\mathscr{S}:= \{S_i\}_{i\in I}$ of transitive, reflexive binary relations over $D\times \nu,$ we say that $\mathscr{S}$ is a \emph{system} if the following hold: 
\begin{enumerate}
\item if $(X, \zeta)\ S_i\ (Y,\eta)$ and $(X,\zeta)\neq (Y,\eta),$ then $X\subsetneq Y;$
\item for every $X\subseteq Y,$ if both $(X,\zeta)\ S_i\ (Z, \theta)$ and $(Y, \eta)\ S_i\ (Z, \theta),$\linebreak
then $(X, \zeta)\ S_i\ (Y, \eta);$
\item for every $X, Y\in D,$ there is $Z\supseteq X,Y$ and $\zeta_X, \zeta_Y, \eta\in \nu$ such that for some $i\in I$ we have $(X, \zeta_X)\ S_i\ (Z, \eta)$ and $(Y, \zeta_Y)\ S_i\ (Z,\eta)$ (in particular, if $X\subseteq Y,$ then $(X,\zeta_X)\ S_i\ (Y,\zeta_Y)$). 
\end{enumerate}
\end{definition}

To prove the consistency of the strong tree property at $\aleph_{\omega+1},$ we will have to deal with a system similar to the one defined above. In this section, we analyze some properties of these structures. 

The elements of $D\times \nu$ are called \emph{nodes of the system}. Given two nodes $u$ and $v,$ we say that they are \emph{$S_i$-incompatible,} for some $i \in I,$ if there is no $w\in D\times \nu$ such that 
$u\ S_i\  w$ and $v\ S_i\ w.$ We will say that a node $u$ belongs to a \emph{level $X$} if the first coordinate of $u$ is $X$ (i.e. $u=(X,\zeta),$ for some $\zeta\in \nu$). 

\

\begin{definition} Let $\{S_i\}_{i\in I}$ be a system on $D\times \nu$ and let $b: D\to \nu$ be a partial function.  
\begin{enumerate}
\item We say that $b$ is an \emph{$S_i$-branch} for some $i\in I,$ if the following holds. For every 
$X\in \dom(b)$ and for every $Y\in D$ such that $Y\subseteq X,$ we have
\begin{center}
$Y\in \dom(b)$ iff there exists $\zeta<\nu$ such that $(Y, \zeta)\ S_i\ (X, b(X)),$  
and $b(Y)$ is the unique $\zeta$ witnessing this.
\end{center}

\item We say that $b$ is a \emph{cofinal branch} for the system if it is an $S_i$-branch for some $i\in I,$ and $X\in \dom(X)$ for cofinally many $X$'s in $D.$  
\end{enumerate}
\end{definition}


We will often work with families of branches satisfying specific conditions. 

\begin{definition}\label{system of branches} Let $\{S_i\}_{i\in I}$ be a system on $D\times \nu,$ a \emph{system of branches} is a family 
$\{b_j\}_{j\in J}$ such that 
\begin{enumerate}
\item every $b_j$ is an $S_i$-branch for some $i\in I;$
\item for every $X\in D,$ there is $j\in J$ such that $X\in \dom(b_j).$
\end{enumerate}
\end{definition}

A lemma by Silver establishes that whenever we force with a forcing that has enough closure, it cannot add cofinal branches to a given tree. 

\begin{lemma} (Silver) Let $\tau, \kappa$ be regular cardinals, and suppose $\tau<\kappa	\leq 2^{\tau}.$ Let $\mathbb{P}$ be a $\tau^+$-closed forcing in a model $V$ and let $T$ be a $\kappa$-tree. 
Then for every generic extension $V[G]$ by $\mathbb{P},$ every branch of $T$ in $V[G]$ is in fact a member of $V.$ \end{lemma}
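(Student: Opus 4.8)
The plan is to argue by contradiction using the classical \emph{splitting tree of conditions}. Suppose some $p\in\mathbb{P}$ forces that $\dot b$ is a branch of $T$ with $\dot b\notin V$ (here $\dot b(\alpha)$ denotes the unique node of $\dot b$ on level $\alpha$). I would first record two easy observations. (a) If $q\in\mathbb{P}$ and $q\force \dot b(\alpha)=t$ for a node $t$ on level $\alpha$, then $q$ decides $\dot b(\beta)$ for every $\beta<\alpha$, namely as the unique $<_T$-predecessor of $t$ on level $\beta$; hence the set of levels decided by $q$ is an initial segment of $\kappa$. If that initial segment were unbounded in $\kappa$, then by downward closure $q$ would decide every $\dot b(\alpha)$, and the ground-model set $\{t\in T:q\force \check t\in\dot b\}$ would be forced by $q$ to equal $\dot b$, contradicting $q\le p$. (b) Consequently, for every $q\le p$ there is a level $<\kappa$ not decided by $q$, so there are $q_0,q_1\le q$ and $\alpha<\kappa$ with $q_0\force\dot b(\alpha)=t_0$, $q_1\force\dot b(\alpha)=t_1$ and $t_0\neq t_1$ (hence $<_T$-incomparable, being on the same level).

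Next I would construct, by recursion on $\delta\le\tau$, a tree of conditions $\langle p_s:s\in{}^{\le\tau}2\rangle$ below $p$ together with levels $\langle\alpha_s:s\in{}^{<\tau}2\rangle$ with $\alpha_s<\kappa$, such that $p_s\le p_t$ whenever $t\subseteq s$; at each limit $\delta\le\tau$ the condition $p_s$ (for $s\in{}^{\delta}2$) is a lower bound of the decreasing sequence $\langle p_{s\restr\xi}:\xi<\delta\rangle$; and for every $s\in{}^{<\tau}2$ the conditions $p_{s^\frown 0},p_{s^\frown 1}$ force $\dot b(\alpha_s)$ to be two distinct nodes. The successor step is precisely observation (b) applied to $q=p_s$, and the limit steps — which involve sequences of length $\le\tau$, i.e.\ of length $<\tau^+$ — are the only place the $\tau^+$-closure of $\mathbb{P}$ is used.

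Finally I would extract the contradiction. Set $\gamma^*:=\sup\{\alpha_s+1:s\in{}^{<\tau}2\}$; since $\kappa$ is regular (and, in the case of interest, $\tau=\omega$, so ${}^{<\tau}2$ is countable) we get $\gamma^*<\kappa$. For each $f\in{}^{\tau}2$ the condition $p_f$ forces $\dot b$ to meet level $\gamma^*$, so there are $\bar p_f\le p_f$ and a node $t_f$ on level $\gamma^*$ with $\bar p_f\force t_f\in\dot b$. If $f\neq g$, let $\delta$ be least with $f(\delta)\neq g(\delta)$ and put $s=f\restr\delta=g\restr\delta$; then $\bar p_f\le p_{s^\frown f(\delta)}$ and $\bar p_g\le p_{s^\frown g(\delta)}$ force $\dot b(\alpha_s)$ to be distinct nodes, while $\alpha_s<\gamma^*$ forces those nodes to be the $<_T$-predecessors on level $\alpha_s$ of $t_f$ and of $t_g$ respectively; hence $t_f\neq t_g$. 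Thus $f\mapsto t_f$ injects ${}^{\tau}2$ into $\lev_{\gamma^*}(T)$, so $2^{\tau}\le|\lev_{\gamma^*}(T)|<\kappa$, contradicting $\kappa\le 2^{\tau}$.

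I expect the main difficulty to be organizational rather than conceptual: getting observation (a) exactly right (a condition deciding a node decides the entire initial segment beneath it, and an unbounded family of decided levels produces a ground-model branch) and then running the limit-stage bookkeeping cleanly, so that exactly the hypothesis ``$\tau^+$-closed'' is what is needed and the splitting levels $\alpha_s$ stay bounded below $\kappa$.
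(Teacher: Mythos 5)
Your argument is the same splitting construction the paper uses: the successor step, the use of $\tau^+$-closure at limit stages, and the final injection of ${}^{\tau}2$ into a single level of $T$ all match the paper's proof, and your observations (a) and (b) are a correct (indeed more explicit than the paper's) account of why a name for a branch not in $V$ splits below every condition. But there is one genuine gap, and it is precisely the point addressed by the first sentence of the paper's proof. Your claim that $\gamma^*=\sup\{\alpha_s+1:\ s\in{}^{<\tau}2\}<\kappa$ does not follow from the regularity of $\kappa$ alone: the supremum ranges over $2^{<\tau}$ many ordinals, and the hypotheses $\tau<\kappa\leq 2^{\tau}$ do not imply $2^{<\tau}<\kappa$. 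For example, if $2^{\aleph_0}=\aleph_2$, $\tau=\aleph_1$ and $\kappa=\aleph_2$, the hypotheses hold while $\vert{}^{<\tau}2\vert=2^{\aleph_0}=\kappa$, so the splitting levels $\alpha_s$ produced by your recursion could be cofinal in $\kappa$, and there is then no level $\gamma^*<\kappa$ on which to run the counting argument. Your parenthetical ``in the case of interest, $\tau=\omega$'' shows you noticed the issue, but the statement being proved is for arbitrary regular $\tau.$

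The repair is the paper's opening move: assume without loss of generality that $\tau$ is minimal with $2^{\tau}\geq\kappa.$ This costs nothing, since a $\tau^+$-closed forcing is a fortiori ${\tau'}^{+}$-closed for every $\tau'\leq\tau,$ and minimality gives $2^{\sigma}<\kappa$ for every cardinal $\sigma<\tau,$ hence $2^{<\tau}<\kappa$ by the regularity of $\kappa;$ with that in hand your bound $\gamma^*<\kappa$ and the rest of your argument go through verbatim. (A minor stylistic difference: the paper keeps all stage-$\alpha$ nodes on a common level $\eta_{\alpha}$ throughout the construction, while you only pass to a common level at the very end; either bookkeeping works once $2^{<\tau}<\kappa$ is secured.)
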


\begin{proof} We may assume that $\tau$ is minimal with $2^{\tau}\geq \kappa.$ 
Let $\dot{b}$ be a $\mathbb{P}$-name for a new branch. We build by induction for each $s\in {}^{\leq \tau+1} 2$ conditions $p_s$ and points $x_s$ of $T$ such that 
\begin{enumerate}
\item if $t \sqsubseteq s,$ then $p_s\leq p_t$ and $x_s>_T\ x_t;$
\item $p_s\force\ x_s \in \dot{b};$ 
\item for each $\alpha,$ the nodes $\{x_s;\ s\in {}^{\alpha}2 \}$ are all on the same level $\eta_{\alpha};$
\item for each $s\in {}^{<\tau} 2$ the nodes $x_{s\smallfrown 0}$ and $x_{s\smallfrown 1}$ are incompatible. 
\end{enumerate}
By minimality of $\tau,$ for every $\alpha<\tau,$ the set $\{x_s;\ s\in {}^{\alpha} 2 \}$ has size less than $\kappa,$ so we can choose $\eta_{\alpha+1}.$ The closure of $\mathbb{P}$ guarantees that the construction works at limit stages. In the end we have a contradiction, because the level $\eta_{\tau}$ must have fewer than $\kappa$ many nodes, yet we have constructed $2^{\tau}$ many distinct ones. \end{proof}

Now we want to generalize Silver's lemma to systems. More precisely, we are going to prove that if a $\kappa$-closed forcing adds a system of branches trough a ``small'' system, then a cofinal branch must already exist in the ground model (Theorem \ref{preservation theorem} below). Such a result generalizes a lemma by Sinapova (see \cite{Sinapova} Preservation Lemma) and will be used to prove the consistency of the strong tree property at $\aleph_{\omega+1}.$
First we prove the following lemma that provides a useful ``splitting argument''. 

\begin{lemma}\label{splitting} (Splitting Lemma) Let $\nu$ be a singular cardinal of countable cofinality and let $\lambda\geq \nu^+.$ Let $\{R_i\}_{i\in I}$ be a system on $D\times \tau$ (with 
$D\subseteq [\lambda]^{<\nu^+}$ cofinal) and let $\mathbb{P}$ be a forcing notion such that:
\begin{enumerate}
\item $\max(\vert I\vert, \tau )<\nu;$
\item $\mathbb{P}$ is $\kappa$-closed for some regular $\kappa$ 
between $\max(\vert I\vert, \tau )^+$ and $\nu;$
\item for some $p\in \mathbb{P},$ $\dot{b}\in V^{\mathbb{P}}$ and $i\in I,$ we have  
$$p\force \dot{b}\textrm{ is a cofinal $R_i$-branch}.$$
\end{enumerate}
If $V$ has no cofinal branches for the system, then for all $\eta<\kappa,$ we can find a sequence $\langle v_{\zeta};\ \zeta<\eta \rangle$ of pairwise $R$-incompatible elements of $D\times\tau$ such that for every $\zeta<\eta,$ there exists $q\leq p$ that forces $v_{\zeta}\in \dot{b}.$ 
\end{lemma}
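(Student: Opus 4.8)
The plan is to build the sequence $\langle v_\zeta : \zeta < \eta\rangle$ by recursion on $\zeta$, maintaining at each stage a descending chain of conditions in $\mathbb{P}$ together with a carefully chosen "splitting level" $X_\zeta \in D$. The key tension is between the closure of $\mathbb{P}$ (which lets us keep going through limit stages of length $<\kappa$) and the forbidden existence of a cofinal branch in $V$ (which is what forces genuine splitting to occur). First I would set up the recursion hypothesis: after stage $\zeta$ I have a condition $p_\zeta \leq p$, a level $X_\zeta \in D$, and a node $v_\zeta = (X_\zeta, b_\zeta)$ with $b_\zeta < \tau$, such that $p_\zeta \force v_\zeta \in \dot b$, and such that $v_\zeta$ is pairwise $R$-incompatible with all $v_{\zeta'}$, $\zeta' < \zeta$. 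Since $\eta < \kappa$ and $\mathbb{P}$ is $\kappa$-closed, at limit stages $\zeta \leq \eta$ of the recursion I simply take $p_\zeta$ to be a lower bound of $\langle p_{\zeta'} : \zeta' < \zeta\rangle$, so the only real work is the successor step.

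For the successor step, suppose $p_\zeta$, $v_\zeta = (X_\zeta, b_\zeta)$ are given. The heart of the argument is to show that below $p_\zeta$ the name $\dot b$ cannot be "decided" in a way that avoids producing a node incompatible with everything chosen so far. Concretely, I would argue as follows: if every condition $q \leq p_\zeta$ forced $\dot b \restr Y$ to take a value $R$-below some fixed node for each $Y \in D$, then by $\kappa$-closure (hence in particular $\tau^+$-closure, since $\tau < \kappa$) I could amalgamate these decisions level by level through a cofinal chain in $D$ — using that $D$ has the property that any two elements extend to a common one inside the system — and read off a cofinal branch for the system living already in $V$, contradicting the hypothesis. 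More precisely, I would build a descending chain $q_0 \geq q_1 \geq \cdots$ of conditions below $p_\zeta$ and an increasing chain $Y_0 \subseteq Y_1 \subseteq \cdots$ cofinal in $D$ (of length $\mathrm{cof}(\nu) = \omega$, using that $\nu$ has countable cofinality and that $D \subseteq [\lambda]^{<\nu^+}$), with $q_n$ deciding $\dot b \restr Y_n = (Y_n, c_n)$ for some ordinal $c_n < \tau$; coherence of the branch $\dot b$ forces the $(Y_n, c_n)$ to be $R_i$-increasing, and $\kappa$-closure gives a lower bound $q_\omega$ of the $q_n$, so the function $Y_n \mapsto c_n$ is (after completing to an $R_i$-branch) a cofinal branch of the system in $V$ — contradiction. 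Therefore there must be two conditions $q', q'' \leq p_\zeta$ and a level $Y \in D$ forcing $\dot b\restr Y$ to take two distinct, hence $R$-incompatible, values; iterating this splitting and a counting argument (there are $<\kappa$ nodes used so far, and $\kappa \leq 2^{\max(|I|,\tau)}$ is irrelevant here — what matters is we only need $\eta$ many, and $\eta < \kappa$) lets me extract a node $v_{\zeta}$ incompatible with all $v_{\zeta'}$, $\zeta' \leq \zeta$, together with a witness $q \leq p_\zeta$ forcing $v_\zeta \in \dot b$; set $p_{\zeta+1} = q$.

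The main obstacle I anticipate is the bookkeeping in the successor step: I need the splitting to produce a node incompatible not just with $v_\zeta$ but with the \emph{entire} family $\langle v_{\zeta'} : \zeta' \leq \zeta\rangle$, and since there may be up to $|\zeta| < \kappa$ many of them this requires iterating the "find-a-split" subargument transfinitely \emph{within} the successor step, again using $\kappa$-closure to pass through limits — so effectively the recursion is nested. Care is needed to ensure that each split refinement is compatible with the branch being cofinal (so that every intermediate condition still forces $\dot b$ to be a cofinal $R_i$-branch), and that the "no cofinal branch in $V$" hypothesis can be invoked at every place where a chain of decisions threatens to stabilize. Once that nesting is organized correctly, the closure hypothesis does all the heavy lifting and the conclusion follows.
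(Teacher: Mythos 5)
Your recursion hypothesis cannot be maintained, and this is the core of the problem. You ask for a single descending chain of conditions $p_\zeta\leq p$ with $p_\zeta\Vdash v_\zeta\in\dot b$ while the nodes $v_\zeta$ are pairwise $R_i$-incompatible. But then $p_{\zeta+1}$ forces both $v_\zeta$ and $v_{\zeta+1}$ into $\dot b$, and since $p_{\zeta+1}\leq p$ also forces $\dot b$ to be a \emph{cofinal} $R_i$-branch, it forces that some $Z\supseteq X_\zeta\cup X_{\zeta+1}$ lies in $\dom(\dot b)$, whence $(X_\zeta,b_\zeta)\ R_i\ (Z,\dot b(Z))$ and $(X_{\zeta+1},b_{\zeta+1})\ R_i\ (Z,\dot b(Z))$. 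The common upper bound is an element of $D\times\tau$ and $R_i\in V$, so $v_\zeta$ and $v_{\zeta+1}$ are $R_i$-compatible already in $V$, contradicting your own hypothesis. This is precisely why the lemma's conclusion only asserts that for each $\zeta$ \emph{some} $q\leq p$ (depending on $\zeta$, not forming a chain) forces $v_\zeta\in\dot b$: the witnessing conditions for distinct $v_\zeta$'s are necessarily incompatible with one another, so they cannot carry the recursion.

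The paper's proof decouples the two roles that you conflate. It sets $E:=\{u\in D\times\tau:\ \exists q\leq p\ (q\Vdash u\in\dot b)\}$, notes that since $V$ has no cofinal branch every $u\in E$ has two $R_i$-incompatible extensions in $E$, and then builds along a genuinely descending chain $\langle p_\zeta\rangle$ an $R_i$-\emph{increasing} sequence of nodes $u_\zeta$ with $p_\zeta\Vdash u_\zeta\in\dot b$ (these are pairwise compatible, as they must be), splitting off at each stage a sibling $v_\zeta\in E$ which is $R_i$-incompatible with $u_\zeta$ but lies above all earlier $u_\varepsilon$; its witnessing condition is discarded from the chain. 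Pairwise incompatibility of the $v_\zeta$ then falls out by transitivity ($u_{\zeta'}$ is incompatible with $v_{\zeta'}$ and $u_{\zeta'}\ R_i\ v_\zeta$), so the nested transfinite recursion you anticipate in the successor step is unnecessary. A secondary error: in your sub-argument for the existence of splitting you take an increasing $\omega$-chain $Y_0\subseteq Y_1\subseteq\cdots$ ``cofinal in $D$''; no countable chain is cofinal in $[\lambda]^{<\nu^+}$ (its union is one set of size at most $\nu$), so the object you would read off in $V$ is not a cofinal branch. The correct route uses that $E\in V$, that two compatible nodes on the same level coincide (clauses (1) and (2) of the definition of a system), and that the forced cofinality of $\dot b$ places nodes of $E$ above any prescribed level, so that a $u\in E$ with no two incompatible extensions in $E$ would yield a cofinal branch in $V$ directly from $E$.
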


\begin{proof} It might be helpful to point out that if $G$ is a generic filter containing $p,$ 
then in $V[G]$ the domain of $\dot{b}^G$ is a cofinal set in $([\lambda]^{<\nu^+})^V.$ We work in $V.$ Let $R:= R_i$ and let $E:= \{ u\in D\times \tau;\ \exists q\leq p( q\force u\in \dot{b})\}.$ First remark that, since $p$ forces that $\dot{b}$ is cofinal, the set $\{X\in D;\ \exists \zeta\in \tau\ (X,\zeta)\in E\}$ is cofinal. 
As $V$ has no cofinal branches for the system, we can find, for all $v\in E$ two $R$-incompatible nodes $w_1, w_2\in E$ such that $v\ R\ w_1,$ $v\ R\ w_2.$ 

We inductively define for all $\zeta< \eta$ two nodes $u_\zeta, v_\zeta\in E$ and a condition $p_\zeta\leq p$ such that:
\begin{enumerate}
\item $u_\zeta$ and $v_\zeta$ are $R$-incompatible; 
\item\label{claim:Rchain} for all $\varepsilon<\zeta,$ $u_{\varepsilon}\ R\ u_{\zeta}$ and $u_\varepsilon\ R\ v_\zeta;$
\item $p_{\zeta}\force u_{\zeta}\in \dot{b};$ 
\item the sequence $\langle p_\varepsilon;\ \varepsilon\leq \zeta  \rangle$ is decreasing;
\end{enumerate}

Let $u$ be any node in $E.$ From the remark above, there are $u_0,v_0\in E$ which are $R$-incompatible and both $u\ R\ u_0$ and 
$u\ R\ v_0$ hold. By definition of $E,$ there is a condition $p_0\leq p$ such that $p_0\force u_0\in \dot{b}.$

Let $\zeta>0$ and assume that $u_\varepsilon,$ $v_\varepsilon,$ $p_\varepsilon$ 
are defined for every $\varepsilon <\zeta.$ Let $q$ be stronger than every condition in $\{p_\varepsilon;\ \varepsilon<\zeta\}.$ By the inductive hypothesis (claim \ref{claim:Rchain}), the nodes 
$\langle u_\varepsilon;\ \varepsilon<\zeta  \rangle $ form an $R$-chain whose levels are sets 
in $[\lambda]^{<\nu^+}.$ The union of the levels of these nodes is a set $X$ in $[\lambda]^{<\nu^+}$ and since 
$\dot{b}$ is forced to be a cofinal $R$-branch we can find a node $h$ of level above $X$ and a condition $q^*\leq q$ such that $q^*\force h\in \dot{b}.$ It follows that $u_\varepsilon\ R\ h,$ for all $\varepsilon<\zeta.$ Since there is no cofinal branch in 
$V$ for the system, we can find two $R$-incompatible nodes $u_\zeta,v_\zeta\in E$ and a condition 
$p_\zeta\leq q^*$ such that $h\ R\ u_\zeta,$ $h\ R\ v_\zeta$ and $p_\zeta\force u_\zeta\in \dot{b}.$ That completes the construction.

The sequence $\langle v_\zeta;\ \zeta<\eta  \rangle$ is as required: for if $\zeta'<\zeta<\eta,$ then by definition $u_{\zeta'}$ and $v_{\zeta'}$ are $R$-incompatible, and 
$u_{\zeta'}\ R\ v_{\zeta},$ hence $v_{\zeta'}$ and $v_{\zeta}$ are $R$-incompatible as well. \end{proof}

\

\begin{theorem}\label{preservation theorem} (Preservation Theorem) In a model $V,$ we let $\nu$ be a singular cardinal of countable cofinality and let $\lambda\geq \nu^+.$ Let $\{R_i\}_{i\in I}$ be a system on $D\times \tau$ (with $D\subseteq [\lambda]^{<\nu^+}$ cofinal), let $\mathbb{P}$ be a forcing notion and let $G\subseteq \mathbb{P}$ a generic filter over $V.$ Assume that   
\begin{enumerate}
\item $\max(\vert I\vert, \tau )<\nu;$
\item $\mathbb{P}$ is $\kappa$-closed for some regular $\kappa$ between 
$\max(\vert I\vert, \tau )^+$ and $\nu;$
\item in $V[G]$ there is a system of branches $\{b_j\}_{j\in J}$ through $\{R_i\}_{i\in I}$ such that 
\begin{enumerate}
\item $J\in V$ and $\vert J\vert^+<\kappa;$ 
\item\label{cofinal} for some $j\in J,$ the branch $b_j$ is cofinal. 
\end{enumerate}
\end{enumerate}
Then, for some $i\in I,$ there exists in $V$ a cofinal $R_i$-branch.   
\end{theorem}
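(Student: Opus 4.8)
The plan is to argue by contradiction: assume that for every $i \in I$ there is no cofinal $R_i$-branch in $V$. Since $\{b_j\}_{j \in J}$ is a system of branches in $V[G]$ with at least one cofinal member, fix $j_0 \in J$ and a condition $p \in G$ forcing that $\dot b_{j_0}$ is a cofinal $R_i$-branch for some fixed $i$; if no single $b_j$ in the ground-model-indexed family were cofinal we could still work with the cofinal one. The Splitting Lemma (Lemma \ref{splitting}) applies under exactly the present hypotheses, so for every $\eta < \kappa$ we obtain in $V$ a sequence $\langle v_\zeta : \zeta < \eta \rangle$ of pairwise $R_i$-incompatible nodes of $D \times \tau$, each forced into $\dot b_{j_0}$ by some $q_\zeta \leq p$.

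First I would fix $\eta := |J|^+ < \kappa$ and build a tree of conditions below $p$ branching according to which of the $v_\zeta$ a condition forces into which branch. More precisely, the key step is a fusion/splitting construction of depth $|J|^+$: using $\kappa$-closure (and $|J|^+ < \kappa$, $\max(|I|,\tau)^+ \leq \kappa$) I build conditions $p_s$ for $s$ in a suitable tree of sequences, so that incompatible nodes get forced into the various branches $b_j$; since there are only $|J|$ branches available to catch the cofinal node's initial segments, a counting/pigeonhole argument at the top level will force two $R_i$-incompatible nodes to lie on the same branch $b_j$, which is impossible because a single $S_i$-branch is, by definition, functional: on each level $X$ it picks a unique $\zeta$, and along it all chosen nodes are pairwise $R_i$-comparable hence $R_i$-compatible. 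That contradiction closes the argument.

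The main obstacle — and the heart of the proof — is organizing this splitting argument so that the pigeonhole actually bites: one must arrange that the $|J|^+$ many pairwise-incompatible nodes produced by the Splitting Lemma are each captured by \emph{some} branch $b_{j(\zeta)}$ in the family (this is where the hypothesis that $\{b_j\}_{j\in J}$ is a \emph{system} of branches, so every level is in the domain of some $b_j$, is used), and then that two distinct $\zeta, \zeta'$ get the same $j$. The delicate point is that membership of a node in $b_j$ must be decided by a condition below $p$, and we need these decisions to be mutually coherent as we descend through $\kappa$-many stages; $\kappa$-closure handles limits, and $\max(|I|,\tau) < \nu$ together with $\kappa \leq \nu$ keeps the objects small enough for the construction to go through. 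Once two incompatible $v_\zeta, v_{\zeta'}$ are forced into the same $b_j$, the definitional clause that $b_j$ is an $R_{i}$-branch (a partial function on $D$ whose values are $R_i$-linked) yields a node $w$ with $v_\zeta\, R_i\, w$ and $v_{\zeta'}\, R_i\, w$, contradicting $R_i$-incompatibility; hence the assumption fails and a cofinal $R_i$-branch exists in $V$ for some $i \in I$.
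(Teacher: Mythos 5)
Your overall strategy (argue by contradiction, invoke the Splitting Lemma \ref{splitting}, use $\kappa$-closure for the fusion, and close with a pigeonhole over the small index set $J$) is the same as the paper's in outline, but the step where the pigeonhole is supposed to ``bite'' has genuine gaps. First, you want each of the pairwise $R_i$-incompatible nodes $v_\zeta$ (produced by splitting the single cofinal branch $\dot b_{j_0}$) to be ``captured by some branch $b_{j(\zeta)}$''. The system-of-branches hypothesis (Definition \ref{system of branches}) only guarantees that every \emph{level} $X\in D$ lies in the domain of some $b_j$; it does not say that the particular node $(X,\zeta)$ is the value of any $b_j$ at $X$. Moreover the $v_\zeta$ are forced into $\dot b_{j_0}$ only by pairwise incompatible conditions $q_\zeta$, so in $V[G]$ there is no reason any of them lies on any branch at all. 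Second, even if two $R_i$-incompatible nodes did lie on a common branch $b_j$, that is contradictory only when $b_j$ is a branch for the \emph{same} relation $R_i$: the members of the family may be $R_{i'}$-branches for various $i'\in I$, and lying on a common $R_{i'}$-branch is perfectly compatible with $R_i$-incompatibility. Your closing argument silently assumes the catching branch is an $R_i$-branch.

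The paper's proof is organized precisely to avoid these two problems, and this is the content missing from your sketch. One first splits $J$ into the branches forced cofinal (the set $A$) and the rest, and fixes $X^*$ so that every level containing $X^*$ is forced to be caught by some \emph{cofinal} branch --- this matters because the Splitting Lemma applies only to branches forced to be cofinal. Since one cannot predict which branch will do the catching, the splitting construction is carried out for \emph{every} $a\in A$ simultaneously, producing for each $a$ a sequence of pairwise $R_a$-incompatible nodes $u^a_\gamma$ with conditions $q^a_\gamma$ chosen decreasing in $a$, so that for each $\gamma$ a single condition $p_\gamma$ lies below all of them. Each $p_\gamma$ is then extended to force some node $w_\gamma$ on one fixed high level $Y^*\supseteq X^*$ into some $\dot b_{a_\gamma}$ with $a_\gamma\in A$, and the pigeonhole runs over the pairs $(w_\gamma, a_\gamma)$; consequently $\eta$ must exceed $\max(\vert J\vert,\tau)$, not just $\vert J\vert$ (your choice $\eta=\vert J\vert^+$ may be $\leq\tau$). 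For two indices $\gamma\neq\delta$ realizing the same pair $(w^*,a^*)$ one gets, already in $V$, that $u^{a^*}_\gamma\ R_{a^*}\ w^*$ and $u^{a^*}_\delta\ R_{a^*}\ w^*$, contradicting their $R_{a^*}$-incompatibility. Without the simultaneous treatment of all cofinal branches, the restriction to levels above $X^*$, and the pigeonhole on the pair (top node, branch index), the contradiction you aim for does not follow.
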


\begin{proof} Suppose for contradiction that $V$ has no cofinal branches for the system $\{R_i\}_{i\in I}.$ 
Let $\{\dot{b}_j\}_{j\in J}$ be $\mathbb{P}$-names for the branches of the system of branches in the generic extension. The idea of the proof is similar to the proof of Silver's lemma above and it follows three steps. 
\begin{enumerate}
\item We consider just the $\dot{b}_j$'s that are forced to be cofinal and for every such $\dot{b}_j,$ we use the Splitting Lemma to build $\eta$ many incompatible nodes that are forced to belong to $\dot{b}_j,$ where $\eta$ is a cardinal between $\max(\vert J\vert, \vert I\vert, \tau)$ 
and $\kappa.$ 
\item By using the $\kappa$-closure of $\mathbb{P}$ and the fact that there are less than $\kappa$ many possible cofinal branches, we find a name $\dot{b}$ for a $R$-branch and $\eta$ many 
$R$-incompatible nodes $\langle u_{\gamma};\ \gamma<\eta \rangle$ that are forced by ``nice conditions'' to belong to $\dot{b}.$ 
\item As $\eta<\nu^+,$ all these nodes are below some level $X\in D$ and we can find a node $w$ on a level above $X$ which is forced by those conditions to belong to $\dot{b}$ as well. Then we have a contradiction, as $w$ stands in the relation $R$ with $R$-incompatible nodes below it.
\end{enumerate}

We work in $V.$ 
Fix, for every $j\in J$ a condition $p_j$ deciding whether or not $\dot{b}_j$ is cofinal. We can choose the 
$p_j$'s so that they form a decreasing sequence, then by the $\kappa$-closure of $\mathbb{P}$ (recall $\vert J\vert<\kappa$) there exists a condition $p$ deciding, for every $j\in J$ whether or not $b_j$ is cofinal. 
We let $B:= \{j\in J;\ p\force \dot{b}_j\textrm{ is not cofinal} \}.$
For every $j\in B,$ 
fix $X_j\in [\lambda]^{<\nu^+}$ such that $p$ forces that $\dom(\dot{b}_j)$ has empty intersection with every $Y\supseteq X_j.$ Since $B$ has size less than $\nu,$ the set 
$X^*:= \underset{j\in B}{\bigcup} X_j$ is in $[\lambda]^{<\nu^+}.$ 
Let $C^*:=\{Z\in D;\ X^*\subseteq Z \}.$
Define $A:= \{j\in J;\ p\force \dot{b}_j \textrm{ is cofinal}   \},$
then by hypothesis $A$ is non empty (claim \ref{cofinal}). Moreover, by strengthening $p$ if necessary, we can 
assume 

\begin{equation} \label{eq:one} p\force \forall X\in C^*\exists j\in A( X\in \dom(\dot{b}_j)) 
\end{equation}

\

(use condition $2$ of Definition \ref{system of branches} and the definition of $C^*$). 
For every $a\in A,$ we denote by $R_a$ the relation in the system such that 
$p\force \dot{b}_a \textrm{ is an $R_a$-branch}.$
Fix a regular cardinal $\eta$ between 
$\max(\vert J\vert, \vert I\vert,  \tau )$ and $\kappa,$ we prove the following claim.  

\begin{claim} Let $\vartriangleleft$ be a well ordering (strict) of $A.$ For every $a\in A,$ we can define 
$\langle q_{\gamma}^a;\ \gamma<\eta \rangle $ and $\langle u_{\gamma}^a;\ \gamma<\eta \rangle$ such that 
\begin{enumerate}
\item for all $\gamma<\eta,$ $q_{\gamma}^a\leq p$ and $q_{\gamma}^a\force u_{\gamma}^a\in \dot{b}_a,$ 
\item the nodes $\langle u_{\gamma}^a;\ \gamma<\eta \rangle$ are pairwise $R_a$-incompatible, 
\item\label{claim:decreasing} for all $\gamma<\eta,$ the sequence $\langle q_{\gamma}^c;\ c\vartriangleleft a\rangle$ is decreasing, 
i.e. if $b \vartriangleleft c,$ then $q_{\gamma}^{c}\leq q_{\gamma}^{b}.$ 
\end{enumerate}   
\end{claim}

\begin{proof} We proceed by induction on the ordering $\vartriangleleft.$ Assume that the sequences have been defined up to $a\in A$ (i.e. for every $c\vartriangleleft a$).  For every $\gamma<\eta,$ let $r_{\gamma}$ be stronger than every condition in the set $\{q_{\gamma}^{c};\ c\vartriangleleft a \}$ (the sequence $\langle q_{\gamma}^{c};\ c\vartriangleleft a \rangle$ is decreasing by claim \ref{claim:decreasing}) and let $E_{\gamma}:= \{ u\in D\times \tau;\ \exists q\leq r_{\gamma}( q\force u\in \dot{b}_a)\}.$ For all $\gamma<\eta,$ there exists $\langle v_{\zeta}^{\gamma};\ \zeta<\eta \rangle$ like in the conclusion of Lemma \ref{splitting} applied to $r_{\gamma}$ and $\dot{b}_a.$ Let $X_{\gamma}\in [\lambda]^{<\nu^+}$ be such that the level of each $v_{\zeta}^{\gamma}$ is below $X_{\gamma}$ and let $X^*\supsetneq \underset{\gamma<\eta}{\bigcup} X_{\gamma}$ in $D.$ We want to define the sequence $\langle u_{\gamma}^a;\ \gamma<\eta \rangle$ with each $u_{\gamma}^a\in E_{\gamma}$ belonging to a level above $X^*.$ We proceed by induction: suppose 
we have defined $\langle u_{\gamma}^a;\ \gamma<\delta \rangle $ for some $\delta<\eta.$ For every $\gamma<\delta,$ there is at most one $\zeta<\eta$ such that 
$v_{\zeta}^{\delta}\ R_a\ u_{\gamma}^a$ 
(because the $v_{\zeta}^{\delta}$'s are pairwise $R_a$-incompatible), let $\zeta_{\gamma}$ be that unique index if it exists and let $\zeta_{\gamma}$ be $0$ otherwise. Choose 
$\zeta\in \eta\setminus \{\zeta_{\gamma}^{\delta};\ \gamma<\delta \},$ then for all $\gamma<\delta,$ the nodes $v_{\zeta}^{\delta}$ and $u_{\gamma}^a$ are $R_a$-incompatible. 
Let $u_\delta^a\in E_\delta$ be such that $v_{\zeta}^{\delta}\ R_a\ u_\delta^a.$ Then, for all $\gamma<\delta,$ the nodes $u_{\gamma}^a$ and $u_\delta^a$ are $R_a$-incompatible. Since for every $\gamma<\eta,$ we have $u_\gamma^a\in E_\gamma,$ we can find a condition $q_\gamma^a\leq r_{\gamma}$ such that $q_{\gamma}^a\force u_{\gamma}^a\in \dot{b}_a.$ That completes the proof of the claim.  \end{proof}

We return to the proof of the theorem. Condition \ref{claim:decreasing} above guarantees that 
for every $\gamma<\eta,$ the sequence $\langle q_{\gamma}^a;\ a\in A \rangle$ is decreasing. Since 
$A$ has size less than $\kappa,$ we can find for every $\gamma<\eta,$ a condition $p_{\gamma}$ stronger than all the conditions $\langle q_{\gamma}^a;\ a\in A\rangle$ and there is $Y_\gamma\in D$ such that the nodes in $\{u_\gamma^a;\ a\in A\} $ belong to levels below $Y_\gamma.$ Let $Y^*\in C^*$ be such that 
$Y^*\supseteq \underset{\gamma}{\bigcup}Y_{\gamma}.$ For all $\gamma<\eta,$ we fix $p_{\gamma}^*, w_{\gamma}$ and $a_{\gamma}$ such that $p_{\gamma}^*\leq p_{\gamma},$ $w_\gamma$ is a node on level $Y^*,$ $a_{\gamma}\in A$ and 
$p_{\gamma}^*\force w_{\gamma}\in \dot{b}_{a_{\gamma}}$ (use Equation \ref{eq:one}). Since $\vert A\vert, \tau<\eta,$ there is $w^*$ on level $Y^*$ and $a^*\in A$ such that $w_{\gamma}=w^*,$ $a_{\gamma}= a^*,$ for 
almost all $\gamma<\eta.$ Let $b^*:=\dot{b}_{a^*}.$ Given two distinct 
$\gamma,\delta<\eta$ large enough, if $u:= u_{\gamma}^{a^*}$ and 
$v:= u_{\delta}^{a^*},$ then the following hold:
\begin{enumerate}
\item $p_{\gamma}^*\force u\in b^*,$ $p_{\gamma}^*\force w^*\in b^*;$
\item $p_\delta^*\force v\in b^*,$ $p_\delta^*\force w^*\in b^*.$
\end{enumerate}

It follows that $u\ R_{a^*} w$ and $v\ R_{a^*} w.$ However, $u$ and $v$ are $R_{a^*}$-incompatible by definition,  
and that leads to a contradiction.  \end{proof}

\section{The Strong Tree Property at $\aleph_{\omega+1}$}

Now we are ready to prove the consistency of the strong tree property at $\aleph_{\omega+1}.$ The structure of the proof of this theorem is motivated by Neeman \cite{Neeman}. 

\begin{theorem} Let $\langle \kappa_n\rangle_n<\omega$ be an increasing sequence of indestructibly supercompact cardinals. There is a strong limit cardinal $\mu<\kappa_0$ of cofinality $\omega$ such that by forcing over $V$ with the poset 
$$\Coll(\omega, \mu)\times \Coll(\mu^+, <\kappa_0)\times \Pi_{n<\omega} \Coll(\kappa_n, <\kappa_{n+1}),$$ one gets a model where the strong tree property holds at $\aleph_{\omega+1}.$ 
\end{theorem}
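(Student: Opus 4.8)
The plan is to follow the blueprint of the Magidor--Shelah-style argument from Section~\ref{sec: successors}, but now carried out in the generic extension, using the Preservation Theorem to push the relevant system down to the ground model. First I would fix the supercompact cardinals $\langle \kappa_n\rangle_{n<\omega}$ and, via Laver indestructibility, arrange that each $\kappa_n$ remains supercompact (hence strongly compact) after any $\kappa_n$-directed closed forcing. The singular cardinal $\mu<\kappa_0$ of cofinality $\omega$ is chosen as a strong limit below $\kappa_0$; I would set $V^* := V[H]$ where $H$ is generic for the full poset $\Po := \Coll(\omega,\mu)\times \Coll(\mu^+,<\kappa_0)\times \prod_{n<\omega}\Coll(\kappa_n,<\kappa_{n+1})$. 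The point of this poset is that in $V^*$ one has $\kappa_n = \aleph_{n+2}$ for the relevant indices (with $\mu^+$ becoming $\aleph_1$ and $\kappa_0=\aleph_2$), so that $\nu := \sup_n \kappa_n = \aleph_\omega$ and $\nu^+ = \aleph_{\omega+1}$; one also wants that $\aleph_\omega$ is still a strong limit singular cardinal of cofinality $\omega$ there, and that cardinals are arranged correctly, which follows from the chain-condition and closure properties of the three factors (L\'evy's lemmas quoted in Section~2).

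Next, to verify $(\aleph_{\omega+1},\lambda)$-$\TP$ in $V^*$ for an arbitrary $\lambda\geq \aleph_{\omega+1}$, I would take a $(\nu^+,\lambda)$-tree $F$ in $V^*$ together with $V^*$-enumerations $e_X$ of each level $\lev_X(F)$, $X\in[\lambda]^{<\nu^+}$. The key is to split the forcing: write $\Po \cong \Po_{<n}\times \Po_{\geq n}$, where $\Po_{\geq n}$ is $\kappa_n$-closed (in fact $\kappa_{n-1}$-closed or better), and $\Po_{<n}$ is "small" relative to $\kappa_n$. Working in the intermediate model obtained by forcing only with the small part, the tail forcing $\Po_{\geq n}$ is sufficiently closed; the names $\dot e_X$ and the relations $S_p$ defined at the start of Section~5 give a system $\{S_p\}_{p\in \Po_{\geq n}}$ on $D\times \nu$ where $D := ([\lambda]^{<\nu^+})^{V}$ (cofinal in $V^*$ by the covering property), with index set of size $<\nu$ and $\nu$-indexing — exactly the hypotheses of the Preservation Theorem (Theorem~\ref{preservation theorem}), provided $n$ is chosen large enough that $\max(|I|,\tau)<\nu$ holds and there is a suitable regular $\kappa$ with $\max(|I|,\tau)^+\leq\kappa\leq\nu$. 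A cofinal branch of $F$ in $V^*$ yields a cofinal branch of the system, so the Preservation Theorem hands back a cofinal $S_p$-branch in the intermediate model; but that model is a small forcing extension, and I would then run the $\varphi(\kappa_m,\nu^+)$-style shrinking argument of Theorem~\ref{phi} there — using that the $\kappa_m$ for $m$ large remain strongly compact after the small forcing by indestructibility, so $\varphi$ still holds — to produce the cofinal branch of $F$ itself inside $V^*$. More carefully: one does not actually have a branch of $F$ a priori; rather, one builds the branch directly. The system $\{S_p\}$ together with the strong compactness of the $\kappa_m$ in the intermediate extension is used to mimic the two shrinking lemmas of Theorem~\ref{phi} applied to the colorings $c(X,Y)$ coming from the node enumerations, obtaining a stationary quasi-homogeneous set on which the $\zeta$-th nodes cohere, and their union is the desired cofinal branch of $F$.

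There is a subtlety I should make explicit: Theorem~\ref{phi} takes place in a single model where $\nu$ is a singular limit of cardinals with $\varphi$; here the tree $F$ lives in $V^*$ while the branch-producing combinatorics wants to happen in a model where enough of the $\kappa_n$ are strongly compact, which after the full collapse they are not. The resolution is exactly the factoring above: we never collapse the top; we only ever use $\Po_{<n}$ (small) plus the Preservation Theorem to absorb the $\kappa_n$-closed tail. Concretely, one fixes $n$ so that the system's parameters are below $\kappa_{n-1}$, forces with $\Po_{<n}\times\Coll(\kappa_{n-1},<\kappa_n)$ (say) to land in a model $W$ where $\kappa_m$ for $m\geq n$ are still strongly compact; the remaining forcing $\prod_{m\geq n}\Coll(\kappa_m,<\kappa_{m+1})$ over $W$ is $\kappa_n$-closed, so by the Preservation Theorem any system of branches through the system it adds reflects a cofinal branch into $W$; and in $W$ one applies (the proof of) Theorem~\ref{phi} with the cardinals $\langle\kappa_m\rangle_{m\geq n}$ to $F$ (whose levels are the $S_p$-levels) to extract the cofinal branch. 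Since this works for every $\lambda$, $\aleph_{\omega+1}$ has the strong tree property in $V^*$.

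The main obstacle, and the place where the argument needs genuine care rather than bookkeeping, is the second step: correctly setting up the system in the intermediate model so that all three hypotheses of the Preservation Theorem are simultaneously met for a \emph{single} choice of $n$ — in particular that $|I|$ and $\tau=\nu$ are handled (the index set $I$ is a set of conditions in a tail forcing and must be replaced by a small quotient, or one must argue the system's essential width is $<\nu$), and that the cofinality-$\omega$ structure of $\nu$ interacts correctly with the "cofinal branch of the system $\Rightarrow$ cofinal branch of $F$" translation, since $\dom(\dot b)$ cofinal in $([\lambda]^{<\nu^+})^V$ must be matched against cofinality in the $V^*$-sense. Making the enumerations $e_X$ and the relations $S_p$ land in the right intermediate model — not $V^*$ and not $V$, but the $W$ above — and checking that the Preservation Theorem's conclusion ("a cofinal $R_i$-branch exists in $W$") feeds correctly into the Theorem~\ref{phi} machinery run with the strongly compact $\kappa_m$, $m\geq n$, is the crux. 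Everything else (cardinal arithmetic of the collapse, strong-limit-ness of $\aleph_\omega$, Laver indestructibility bookkeeping, the covering property of $\Po$) is routine given the lemmas already assembled.
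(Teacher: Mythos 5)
Your proposal correctly assembles the right toolkit (covering, indestructibility, the Preservation Theorem, a Magidor--Shelah-style branch construction), but the architecture has two gaps that are exactly the content of the paper's two central lemmas, and what you flag as ``the main obstacle'' is not a bookkeeping issue but the missing idea. First, the system you set up, $\{S_p\}_{p\in\Po_{\geq n}}$ on $D\times\nu$, does not satisfy the hypotheses of Theorem \ref{preservation theorem}: the tail product $\prod_{m\geq n}\Coll(\kappa_m,<\kappa_{m+1})$ has cardinality at least $\nu$, so $\vert I\vert\geq\nu$, and the width is $\tau=\nu$, while the theorem requires $\max(\vert I\vert,\tau)<\nu$. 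The paper avoids this by a different decomposition: it forces first with the \emph{tail} $\mathbb{C}$ only, works in $W=V[H]$, argues by contradiction simultaneously for all $\mu$ (the theorem only asserts that \emph{some} $\mu$ works, and the proof cannot fix $\mu$ in advance as you do), and indexes the system by $I=\{(a,b,\mu):(a,b)\in\mathbb{L}(\mu)\}$, of size $\kappa_0<\nu$. The width is then shrunk from $\nu$ to some $\kappa_n$ by Lemma \ref{system lemma}, which applies an indestructible supercompactness embedding $j$ of $\kappa_0$ in $W$ to the whole assignment $\mu\mapsto\dot F(\mu)$ and evaluates it at the reflected point, $j(\dot F)(\nu)$, building a decreasing $\nu^+$-closed sequence of conditions below $(a^*,0)$ with $a^*=j[\lambda]$. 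Nothing in your outline performs this shrinking, and without it the Preservation Theorem simply does not apply to your system.

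Second, you have no legitimate source for the system of branches that Theorem \ref{preservation theorem} consumes: your first suggestion (``a cofinal branch of $F$ in $V^*$ yields a cofinal branch of the system'') is circular, as you note, and your fallback --- running the $\varphi(\kappa_m,\nu^+)$ argument of Theorem \ref{phi} in the intermediate model --- does not go through there: in that model the tree exists only as a name, the natural colorings would need $\nu$-many colors before the width has been reduced, and quasi-homogeneous sets cannot reconcile statements forced by incompatible conditions, which is precisely what the system formalism is designed to handle. In the paper the branches come from a \emph{second} embedding $\pi$ with critical point $\kappa_{m+1}$ (where $m=n+2$), added by the $\kappa_m$-closed forcing $\Coll(\kappa_m,\gamma)^V$ over $W$: the maps $b_{i,\delta}$ sending $X$ to the unique $\zeta$ with $(\pi[X],\zeta)\ \pi(R_i)\ (a^*,\delta)$ form a system of branches in the sense of Definition \ref{system of branches}, and a pigeonhole over $\vert I\times\kappa_n\vert<\kappa_m$ produces a cofinal one; only then does the Preservation Theorem return a cofinal $R_i$-branch to $W$, whose index $i=(a,c,\mu)$ identifies the $\mu$ for which the contradiction (a cofinal branch of $\dot F_\mu$ in the $\mathbb{L}(\mu)$-extension) is obtained. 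As written, your proposal would need both of these mechanisms supplied before it becomes a proof.
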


\begin{proof} Let $\kappa$ denote $\kappa_0,$ for every $\mu<\kappa$ we let 

\begin{enumerate}
\item $\mathbb{R}(\mu):= \Coll(\omega, \mu)\times \Coll(\mu^+, <\kappa_0)\times \Pi_{n<\omega} \Coll(\kappa_n, <\kappa_{n+1}),$
\item $\mathbb{L}(\mu):= \Coll(\omega, \mu)\times \Coll(\mu^+, <\kappa_0),$
\item $\mathbb{C}:= \Pi_{n<\omega} \Coll(\kappa_n, <\kappa_{n+1}).$ 
\end{enumerate}

Assume that $\nu= \sup_n{\kappa_n}.$ Note that for every $\mu<\kappa,$ the forcing $\mathbb{R}(\mu)$ produces a model where $\aleph_{\omega+1}= \nu^+.$ Fix $H:= \Pi_{n<\omega} H_n\subseteq \mathbb{C}$ generic over $V.$ We work in $W:=V[H].$ Assume for a contradiction that in every extension of $W$ by $\mathbb{L}(\mu)$ with $\mu<\kappa$ strong limit of cofinality $\omega,$ the strong tree property fails at $\nu^+.$ For every such $\mu,$ let $\lambda_{\mu}$ and $\dot{F}(\mu)\in W^{\mathbb{L}(\mu)}$ be a name for a $(\nu^+, \lambda_{\mu})$-tree with no cofinal branches. Let $\lambda= \sup_{\mu<\kappa } \lambda_\mu,$ without loss of generality we can assume that $\lambda_{\mu}= \lambda$ for every $\mu,$ since a $(\nu^+, \lambda_{\mu})$-tree with no cofinal branches can be extended to a $(\nu^+, \lambda)$-tree with no cofinal branches (by Lemma \ref{simple lemma}). Note that for every $\mu$ the poset $\mathbb{L}(\mu)$ has the $\nu^+$-covering property since it is $\kappa_0$-c.c.. Therefore, $([\lambda]^{<\nu^+})^W$ is cofinal in the $[\lambda]^{<\nu^+}$ of any generic extension of $W$ by $\mathbb{L}(\mu).$
Given $X,Y\in [\lambda]^{<\nu^+}$ and $\zeta,\eta<\nu,$ we will write
$\force_{\mathbb{L}(\mu)} (X,\zeta) <_{\dot{F}_{\mu}} (Y, \eta)$
when
$$\force_{\mathbb{L}(\mu)} \textrm{ the $\eta$'th function on level $Y$ extends the
$\zeta$'th function on level $X$}$$

(i.e. for every $\mu$ and $X,$ we fix an $\mathbb{L}(\mu)$-name $\dot{e}_X^{\mu}$ for an enumeration of the level of $X$ into at most $\nu$ elements, then we write 
$\force_{\mathbb{L}(\mu)} (X,\zeta) <_{\dot{F}_{\mu}} (Y, \eta)$
when $\force_{\mathbb{L}(\mu)} \dot{e}_X^{\mu}(\zeta)=\dot{e}_Y^{\mu}(\eta)\restr X$). Consider the following set 
 $$I:= \{(a,b,\mu);\ \mu<\kappa \textrm{ is strong limit of cof $\omega$ and }(a,b)\in \mathbb{L}(\mu)\}.$$ We define a system $\mathscr{S}= \{S_i\}_{i\in I}$ on $[\lambda]^{<\nu^+}\times \nu$ as follows. Given $i= (a,b,\mu)\in I,$ for every $X,Y\in [\lambda]^{<\nu^+}$ and for every $\zeta, \eta<\nu,$ we let   
$$(X, \zeta)\ S_i\ (Y,\eta) \Longleftrightarrow_{def} (a,b)\force (X,\zeta)<_{\dot{F}_\mu} (Y,\eta).$$ 






First we prove that we can shrink the system. 

\begin{lemma}\label{system lemma} There is, in $W,$ an integer $n<\omega$ and a cofinal set $D\subseteq [\lambda]^{<\nu^+}$ such that $\{\ S_i\restr D\times \kappa_n\}_{i\in I}$ is a system.  
\end{lemma}

\begin{proof} $\kappa$ is indestructible supercompact, so we can fix $j: W\to W^*$ a $\sigma$-supercompact elementary embedding with critical point $\kappa,$ where $\sigma$ is large enough for the argument that follows. We have $a^*:=j[\lambda]\in W^*\cap [j(\lambda)]^{<j(\nu^+)}.$ 
Let $F^*$ be the name $j(\dot{F})(\nu),$ where $\dot{F}$ is the map $\mu \mapsto \dot{F}(\mu).$ 
We denote by $\ll \lambda \gg^{<\nu^+}$ the set of all the strictly increasing sequences from an ordinal 
$\alpha<\nu^+.$ into $\lambda.$ For every $s\in\ \ll \lambda \gg^{<\nu^+},$ the image of $s$ 
is a subset of $[\lambda]^{<\nu^+}.$  
We define a sequence $\langle (p_s, q_s, \zeta_s, n_s);\ s\in\ \ll \lambda\gg^{<\nu^+} \rangle $ such that

\begin{enumerate}
\item $(p_s, q_s)\in \Coll(\omega, \nu)\times \Coll(\nu^+, <j(\kappa)),$ $n_s<\omega,$ and $\zeta_s<j(\kappa_{n_s});$ 
\item $(p_s, q_s)\force (j[Im(s)], \zeta_s)<_{F^*} (a^*, 0);$
\item for every $t\sqsubseteq s$ in $\ll \lambda\gg^{<\nu^+},$ we have $q_s\leq q_t.$ 
\end{enumerate}

The sequence is inductively defined as follows.  
Let $s: \alpha\to \lambda$ be a strictly increasing sequence, assume by inductive hypothesis that 
$$\langle (p_s, q_s, \zeta_s, n_s);\ s\in\ \ll \lambda\gg^{<\alpha} \rangle $$ is defined. By condition $(3),$ the sequence $\langle q_{s\restr \beta};\ \beta<\alpha\rangle$ is decreasing. Moreover, $\Coll(\nu^+, <j(\kappa))$ is $\nu^+$-closed, so there exists a lower bound $\bar{q}_s$ for $\langle q_{s\restr \beta};\ \beta<\alpha\rangle.$  The set $j[Im(s)]$ is in $[j(\lambda)]^{<j(\nu^+)},$ so there exists $p_s\in \Coll(\omega, \nu),$ $q_s\leq \bar{q}_s$ in $\Coll(\nu^+, <j(\kappa))$ and $\zeta_s<j(\nu)$ such that $$(p_s, q_s)\force (j[Im(s)], \zeta_s)<_{F^*} (a^*, 0).$$ 
If we let $n_s$ be the minimum integer such that $\zeta_s<j(\kappa_{n_s}),$ then $p_s, q_s,\zeta_s$ and $n_s$ satisfy conditions $1,$ $2$ and $3$ for the sequence $s.$ That completes the definition.

For every $X\in [\lambda]^{<\nu^+}$ we denote by $s_X$ the unique strictly increasing sequence whose 
image is $X$ (i.e. $s_X: o.t. (X) \to \lambda$ and $Im(s_X):= X$). As $\Coll(\omega, \nu)$ has size less than $\lambda^{<\nu^+},$ there is a condition $p$ and a cofinal set $D\subseteq [\lambda]^{<\nu^+}$ such that for every $X\in D,$ we have 
$p= p_{s_{X}}.$ By shrinking $D,$ we can also assume that there exists $n<\omega$ such that $n= n_{s_{X}},$ for every $X\in D.$ 

\begin{claim} $\{\ S_i\restr D\times \kappa_n\}_{i\in I}$ is a system. 
\end{claim}

\begin{proof} We just have to prove that it satisfies condition $(3)$ of Definition \ref{system}. Fix $X,Y\in D,$ by construction we have
 
\begin{enumerate}
\item $(p, q_{s_X})\force (j[X], \zeta_X)<_{F^*} (a^*, 0),$
\item $(p, q_{s_Y})\force (j[Y], \zeta_Y)<_{F^*} (a^*, 0).$
\end{enumerate}

Take any set $Z$ in $D$ such that $s_Z\sqsupseteq s_X, s_Y$ (in particular $Z\supseteq X,Y$), then 
$q_Z$ is stronger than both $q_X$ and $q_Y.$ 
Therefore, the condition $(p, q_Z)$ forces that: 

\begin{enumerate}
\item[(i)] $(j[X], \zeta_X)<_{F^*} (a^*, 0);$
\item[(ii)] $(j[Z], \zeta_Z)<_{F^*} (a^*, 0);$
\item[(iii)] $(j[Z], \zeta_Z)<_{F^*} (a^*, 0);$
\item[(iv)] $(j[Y], \zeta_Y) <_{F^*} (a^*, 0).$
\end{enumerate}  

From $(i)$ and $(ii)$ follows $(p,q_Z)\force (j[X], \zeta_X)<_{F^*} (j[Z], \zeta_Z);$ from $(iii)$ and $(iv)$ follows $(p,q_Z)\force (j[Y], \zeta_Y)<_{F^*} (j[Z], \zeta_Z).$ Then, by elementarity, there exists $\mu<\kappa$ and $(\bar{p}, \bar{q})\in \mathbb{L}(\mu)$ and $\bar{\zeta}_X, \bar{\zeta}_Y, \bar{\zeta}_Z<\kappa_n$ such that 

$$(\bar{p}, \bar{q})\force (X, \bar{\zeta}_X)<_{\dot{F_\mu}} (Z, \bar{\zeta}_Z)\textrm{ and } 
(Y, \bar{\zeta}_Y)<_{\dot{F}_\mu} (Z, \bar{\zeta}_Z).$$
 
If $i= (\bar{p}, \bar{q}, \mu),$ then we just proved 
$(X, \bar{\zeta}_X)\ S_i\ (Z, \bar{\zeta}_Z)$ and $(Y, \bar{\zeta}_Y)\ S_i\ (Z, \bar{\zeta}_Z).$ \end{proof}  

That completes the proof of the lemma. \end{proof}

To simplify the notation, we define $R_i:= S_i\restr D\times \kappa_n,$ for every $i\in I.$ 

Let $m= n+2,$ by the indestructibility of $\kappa_{m+1}$ forcing over $W= V[H]$ with $\Coll(\kappa_m, \gamma)^V$ for sufficiently large $\gamma,$ adds an elementary embedding $\pi: V[H]\to M[H^*]$ with critical point $\kappa_{m+1}$ and $\pi(\kappa_m)>\sup\pi[\lambda]$ (use standard arguments for extending embeddings).   

\begin{lemma} There is in $V[H^*]$ a system of branches $\{b_j\}_{j\in J}$ for the system $\{ R_i\}_{i\in I}$ with $J= I\times \kappa_n,$ such that for some $j\in J,$ the branch $b_j$ is cofinal.  
\end{lemma}

\begin{proof} First note that since $\kappa_n, \vert I\vert <cr(\pi),$ we may assume that $\pi(I)=I$ and 
$\pi ( \{ R_i \}_{i\in I} )= \{\pi(R_i)\}_{i\in I}.$ This is a system on $\pi(D)\times \kappa_n.$ Let $a^*$ be a set in 
$\pi(D)$ such that $\pi[\lambda]\subseteq a^*.$ For every $(i, \delta)\in I\times \kappa_n,$ let $b_{i,\delta}$ be the partial map sending each $X\in D$ to the unique $\zeta<\kappa_n$ such that $(\pi[X], \zeta)\ \pi(R_i)\ (a^*, \delta)$ if such $\zeta$ exists. By elementarity, every $b_{i,\delta}$ is an $R_i$-branch. Condition $(2)$ of Definition \ref{system of branches} is satisfied as well: indeed, if $X\in D,$ then by condition $(3)$ of Definition \ref{system}, there exists $\zeta,\eta<\kappa_n$ and $i\in I$ such that $(\pi[X], \zeta)\ \pi(R_i)\ (a^*, \eta),$ hence $X\in \dom(b_{i, \eta}).$ It remains to prove that for some $j\in J,$ $b_j$ is cofinal. For every $X\in D,$ we fix 
$i_X,\delta_X$ such that $X\in \dom(b_{i_X, \delta_X}).$ The set $I$ has size less than $\kappa_m$ in $W,$ moreover 
$\Coll(\kappa_m, \gamma)^V$ 
is $\kappa_m$-closed in $V[H_m\times H_{m+1}\times \dots]$ and $W= V[H]$ is a $\kappa_m$-c.c. forcing extension of $V[H_m\times H_{m+1}\times \dots],$ so $I$ has size $<\kappa_m$ even in $V[H^*].$ On the other hand $\vert D\vert\geq \kappa_m,$ so there exists a cofinal $D'\subseteq D$ and $i,\delta$ in $V[H^*]$ such that $i= i_X$ and $\delta= \delta_X,$ for every $X\in D'.$ This means that $X\in \dom(b_{i,\delta})$ for every $X\in D',$ namely $b_{i,\delta}$ is a cofinal branch. \end{proof}

$V[H^*]$ is a $\kappa_m$-closed forcing extension of $V[H]= W,$ so we can apply the Preservation Theorem. 
Therefore a cofinal $R_i$-branch $b$ exists in $W,$ for some $i\in I.$ 
Assume that $i= (a,b,\mu),$ for every $X\subseteq Y$ in $\dom(b),$ we have 
$$(a,c)\force (X, b(X))<_{\dot{F}_{\mu}} (Y, b(Y)).$$ 
If $G_0\times G_1\subseteq \mathbb{L}(\mu)$ is any generic filter containing the condition $(a,c),$ then the branch $b$ determines a cofinal branch for $\dot{F}_{\mu}^{G_0\times G_1}$ in $W[G_0\times G_1]$ 
contradicting the fact that $\dot{F}_{\mu}$ is a name for a $(\nu^+, \lambda)$-tree with no cofinal branches. This completes the proof of the theorem. \end{proof}

\section{Conclusions}

We proved that if infinitely many supercompact cardinals exist in a model $V,$ then there is a forcing extension of $V$ where $\aleph_{\omega+1}$ has the strong tree property. We do not know whether 
$\aleph_{\omega+1}$ can consistently satisfy even the \emph{super} tree property. 

We also know (see \cite{Fontanella2}) that from infinitely many supercompact cardinals, one can build a model where the super tree property (hence in particular the strong tree property) holds at every cardinal 
of the form $\aleph_{n+2},$ where $n<\omega.$ Then, it is natural to ask whether it is possible to combine the two consistency results and prove from infinitely many supercompact cardinals, the consistency of the strong tree property ``up to'' 
$\aleph_{\omega+1},$ i.e. at every regular cardinal $\leq \aleph_{\omega+1}$ (above $\aleph_1$). These problems remain open.



\bigskip

\end{document}